%%%%%%%
%%%%%%% Decay of solutions to the Maxwell equation on the Schwarzschild background
%%%%%%%
%%%%%%%
%%%%%%%
%%%%%%%
%%%%%%% Started on: 2014-03-07
%%%%%%% Current version: 2015-01-29
%%%%%%%

\documentclass[reqno,a4paper]{amsart}

\usepackage{amssymb}
\usepackage{amsmath}
\usepackage{amsthm}
\usepackage{latexsym}
\usepackage{mathrsfs}
\usepackage{eucal}
\usepackage{tipa}
\usepackage{graphicx}
\usepackage{hyperref}

\usepackage{color}

\setlength{\textwidth}{148mm}		% Width of text on page 148
\setlength{\textheight}{235mm}		% height of text on page 235
\setlength{\topmargin}{-5mm}		% Margin at top of page -5
\setlength{\oddsidemargin}{5mm} % Odd page sidemargin 5
\setlength{\evensidemargin}{5mm}	% Even page sidemargin 5

%Thomas version

%\setlength{\marginparwidth}{25mm}

%New version

%%%%%%%%%%%%%mnote%%%%%%%%%%%%%%%%%%%%%%%%%%%%%%%%%%

%\newcounter{mnotecount}[section]

%\renewcommand{\themnotecount}{\thesection.\arabic{mnotecount}}

%\newcommand{\mnote}[1]{\protect{\stepcounter{mnotecount}}${\raisebox{0.5\baselineskip}[0pt]{\makebox[0pt][c]
%{\color{magenta}{\tiny\em$\bullet$\themnotecount}}}}$\marginpar{\raggedright\tiny\em$\!\bullet$\themnotecount:
%
%#1}\ignorespaces}

%%%%%%%%%%%%%%%%%%%%%%mymnote%%%%%%%%%%%%%%%%%%%%%%%%%%%%%%%%%%%

%\newcounter{mymnotecount}[section]
%\renewcommand{\themymnotecount}{\thesection.\arabic{mymnotecount}}
%\newcommand{\mymnote}[1]{\protect{\stepcounter{mymnotecount}}${\raisebox{0.5\baselineskip}[0pt]
%{\makebox[0pt][c]{\color{green}{\tiny\em$\bullet$\themnotecount}}}}$\marginpar{\raggedright\tiny\em$\!
%\bullet$\themymnotecount:

%\green{#1}}\ignorespaces}

%\renewcommand{\mymnote}[1]{}
%%%%%%%%%%%%%%%%%%%%%%%%%%%%%%%%%%%%%%%%%%%%%%%%%%%%%%%%%%%%%%

\allowdisplaybreaks[2]

\theoremstyle{plain}
\newtheorem{theorem}{Theorem}

\newtheorem{lemma}[theorem]{Lemma}
\newtheorem{corollary}[theorem]{Corollary}

\newtheorem{definition}[theorem]{Definition}
\newtheorem{remark}[theorem]{Remark}

\def\sDiv{\mathscr{D}}
\def\sCurl{\mathscr{C}}
\def\sCurlDagger{\mathscr{C}^\dagger}
\def\sTwist{\mathscr{T}}

\def\EMTensorH{\mathbf{G}}
\def\EMTensorH{\mathbf{H}}
\def\EMTensorV{\mathbf{V}}

\def\MorawetzCurrJ{\mathbf{P}}
\DeclareMathOperator{\tho}{\text{\rm \textthorn}}
\DeclareMathOperator{\edt}{\eth}

\def\SuperEnOne{\mathbf{SE}_1}

%% Morawetz for Maxwell

\newcommand{\vecMprimary}{A}

\newcommand{\scalMprimary}{q}

%% Orthonormal frame

\newcommand{\OrthFrameT}{\widehat T}
\newcommand{\OrthFrameX}{\widehat X}
\newcommand{\OrthFrameY}{\widehat Y}
\newcommand{\OrthFrameZ}{\widehat Z}

\newcommand{\Reals}{\mathbb{R}}
\newcommand{\di}{d}

\newcommand{\EMTensorT}{T}

\newcommand{\GenVec}{\nu}

\newcommand{\SL}{\mathrm{SL}}

\newcommand{\Co}{{\mathbb C}}	      %the complex numbers
\newcommand{\eps}{\epsilon}

\numberwithin{equation}{section}

\begin{document}

\title{Decay of solutions to the Maxwell equation on the Schwarzschild background}

\author[L. Andersson]{Lars Andersson} \email{laan@aei.mpg.de}
\address{Albert Einstein Institute, Am M\"uhlenberg 1, D-14476 Potsdam,
  Germany
\and
Department of Mathematics, Royal Institute of Technology, SE-100 44 Stockholm, Sweden
}

\author[T. B\"ackdahl]{Thomas B\"ackdahl} \email{thobac@chalmers.se}
\address{The School of Mathematics  and the Maxwell Institute, University of Edinburgh, James Clerk Maxwell Building,
Peter Guthrie Tait Road, Edinburgh
EH9 3FD, UK
\and 
Mathematical Sciences, Chalmers University of Technology and University of Gothenburg, SE-412 96 Gothenburg, Sweden
}

\author[P. Blue]{Pieter Blue} \email{P.Blue@ed.ac.uk}
\address{The School of Mathematics and the Maxwell Institute, University of Edinburgh, James Clerk Maxwell Building,
Peter Guthrie Tait Road, Edinburgh
EH9 3FD,UK}

\begin{abstract}
A new Morawetz or integrated local energy decay estimate for Maxwell test fields on the exterior of a Schwarzschild black hole spacetime is proved. The proof makes use of a new superenergy tensor $\EMTensorH_{ab}$ defined in terms of the Maxwell field and its first derivatives. The superenergy tensor, although not conserved, yields a conserved higher order energy current $\EMTensorH_{ab} (\partial_t)^b$. The tensor $\EMTensorH_{ab}$ vanishes for the static Coulomb field, and the Morawetz estimate proved here therefore yields integrated decay for the Maxwell field to the Coulomb solution on the Schwarzschild exterior.
\end{abstract}

\maketitle

\section{Introduction}
The exterior region of the Schwarzschild spacetime is given in
Schwarzschild coordinates by
$(x^a) = (t,r,\theta,\phi)\in
\Reals \times (2M,\infty) \times [0,\pi] \times [0,2\pi)$
with metric
\begin{align*}
g_{ab}\di x^a \di x^b ={}&(1-2M/r)\di t^2 -(1-2M/r)^{-1}\di r^2
-r^2(\di\theta^2+\sin^2\theta\di\phi^2) .
\end{align*}
The exterior region can be extended, but for simplicity we will not
treat the extension here.  The Schwarzschild metric $g_{ab}$ is static
and spherically symmetric. In particular, the vector field $\xi^a =
(\partial_t)^a$ is Killing, timelike in the exterior, and orthogonal
to the level sets of $t$. With our choice of signature,\footnote{In
  this paper, we follow the conventions of \cite{Penrose:1986fk} and
  assume all fields are smooth unless otherwise stated.} the timelike
condition is that $\xi^a \xi_a > 0$. In fact, as $r\to\infty$, we find
$\xi^a\xi_a\to1$. Recall the condition for $\xi^a$ to be a Killing
vector is $\nabla_{(a} \xi_{b)} = 0$. A related condition for a
$2$-form $Y_{ab}$ to be a Killing-Yano tensor is $\nabla_{(a} Y_{b)c}
= 0$.  The Schwarzschild exterior admits a Killing-Yano tensor given
by
\begin{align}
Y_{ab}={} - 2 r^3 \sin\theta d\theta_{[a}d\phi_{b]}.  \label{eq:Ydef}
\end{align}

A Maxwell test field is a real 2-form $F_{ab}$ satisfying the equations
$\nabla^a F_{ab} = 0, \nabla_{[a} F_{bc]} = 0$.
From now on, unless otherwise stated we shall assume that $F_{ab}$ is a Maxwell field.

To state our main result, we introduce the real $1$-form
$$
U_a = -r^{-1}\nabla_a r ,
$$
and define in terms of $U_a$, $F_{ab}$ and $Y_{ab}$, a real $2$-form
$Z_{ab}$, and a complex $1$-form $\beta_a$ by
\begin{align}
Z_{ab}={}&- \tfrac{4}{3} {*}F_{[a}{}^{c}Y_{b]c},
\label{eq:DefZTensorVersion}\\
\beta_a={}&- \tfrac{1}{2} U^{b} Z_{ab}
 -  \tfrac{1}{2}i U^{b} {*}Z_{ab}
 -  \tfrac{1}{2} \nabla_{b}Z_{a}{}^{b}
 -  \tfrac{1}{2}i \nabla_{b}{*}Z_{a}{}^{b} .
\label{eq:DefbetaTensorVersion}
\end{align}
The superenergy tensors (in the terminology of \cite{senovilla:2000CQGra..17.2799S})  for $\beta_a$ and $Z_{ab}$ are
\begin{align}
\EMTensorH_{ab}={}&\beta_{(a}\bar{\beta}_{b)}
 - \tfrac{1}{2} g_{ab} \beta^{c} \bar{\beta}_{c} ,\\
\mathbf{W}_{ab}={}&- \tfrac{1}{2} Z_{a}{}^{c} Z_{bc} + \tfrac{1}{8} g_{ab} Z_{cd} Z^{cd}.
\end{align}
\begin{remark} The tensors $\EMTensorH_{ab}$, $\mathbf{W}_{ab}$ are quadratic in $F_{ab}$ and its derivatives up to first order (zeroth order for $\mathbf{W}_{ab}$). Recall that a tensor $S_{ab}$ satisfies the dominant energy condition if $S_{ab} \nu^a \zeta^b \geq 0$ for future-directed timelike vectors $\nu^a$, $\zeta^a$. It follows from their definition that both $\EMTensorH_{ab}$ and $\mathbf{W}_{ab}$ satisfy the dominant energy condition, cf. \cite{bergqvist:1999CMaPh.207..467B,senovilla:2000CQGra..17.2799S}. The notion of superenergy tensor does not necessarily imply any conservation property.
\end{remark}
Given a vector field $\GenVec^a$, the superenergy current defined by $\EMTensorH_{ab}$ with respect to $\GenVec^a$  is $\EMTensorH_{ab} \GenVec^b$.
For the static Killing field $\xi^a$ we have that the current $\EMTensorH_{ab} \xi^b$ is conserved,
\begin{equation}\label{eq:Jcons}
\nabla^a \left (\EMTensorH_{ab} \xi^b\right ) = 0,
\end{equation}
see Lemma~\ref{lem:divH}.  In contrast to the standard symmetric
energy-momentum tensor for the Maxwell field, the tensors
$\EMTensorH_{ab}$ and $\mathbf{W}_{ab}$ are \emph{not} conserved in
general, $\nabla^a \EMTensorH_{ab} \ne 0$, $\nabla^a \mathbf{W}_{ab}
\ne 0$. On spacetimes admitting a Killing spinor satisfying an aligned matter
condition, we have introduced another conserved tensor
\cite{AnderssonBackdahlBlue:NewConservationLaw:2014arXiv1412.2960A}.

For a vector field $\nu^a$, a spacelike hypersurface $\Sigma$ with
future-directed normal $N^a$, and induced volume element $\mu_\Sigma$, we define
\begin{align}
E_\GenVec(\Sigma)={}& \int_\Sigma
\EMTensorH_{ab} \GenVec^b
N^a \di\mu_{\Sigma} . \label{eq:Eflux}
\end{align}
It follows from the dominant energy condition that if
$\GenVec^a$ is timelike and future-directed, then $E_\GenVec(\Sigma)\geq 0$. By the conservation identity \eqref{eq:Jcons}, if $\Sigma_1, \Sigma_2$ are hypersurfaces bounding a spacetime region $\Omega$, then the superenergy with respect to $\xi^a$ on $\Sigma_1$ and $\Sigma_2$ are equal.

Let $\OrthFrameT^a, \OrthFrameX^a, \OrthFrameY^a, \OrthFrameZ^a$ be the orthonormal frame
\begin{align*}
\OrthFrameT^a={}&(1-2M/r)^{-1/2}(\partial_t)^a,&
\OrthFrameX^a={}&r^{-1}(\partial_\theta)^a,
\\
\OrthFrameY^a={}&r^{-1}\csc\theta(\partial_\phi)^a,&
\OrthFrameZ^a={}&(1-2M/r)^{1/2}(\partial_r)^a,
\end{align*}
adapted to the foliation by level sets of $t$. Then if $\Sigma_t$ is a level set of $t$, the  superenergy
with respect to $\xi^a$ on $\Sigma_t$ is given by
\begin{align}
E_{\xi}(\Sigma_t) ={}& \frac{1}{2} \int_{\Sigma_t}
\left (|\beta_{\OrthFrameT}|^2+|\beta_{\OrthFrameX}|^2+|\beta_{\OrthFrameY}|^2+|\beta_{\OrthFrameZ}|^2\right ) r^2\sin\theta\di r\di\theta\di\phi , \label{eq:Exi}
\end{align}
where for a vector $\GenVec^a$, $\beta_\GenVec = \beta_a \GenVec^a$. Similarly,
$$
\mathbf{W}_{\OrthFrameT\OrthFrameT} = \tfrac{1}{4} \bigl(|Z_{\OrthFrameT\OrthFrameX}|^2 + | Z_{\OrthFrameT\OrthFrameY}|^2 + |Z_{\OrthFrameX\OrthFrameZ}|^2 + |Z_{\OrthFrameY\OrthFrameZ}|^2\bigr).
$$
Hence, it controls all components of $Z_{ab}$, because the structure of $Y_{ab}$ gives $Z_{\OrthFrameT\OrthFrameZ}=0$, $Z_{\OrthFrameX\OrthFrameY}=0$, see discussion below.

We are now ready to state our main result.
\begin{theorem}[Energy bound and Morawetz estimate]
\label{thm:MainResult}
Let $\Sigma_1$ and $\Sigma_2$ be spherically symmetric spacelike
hypersurfaces in the exterior region of the Schwarzschild spacetime
such that $\Sigma_2$ lies in the future of $\Sigma_1$ and
$\Sigma_2\cup -\Sigma_1$ is the oriented boundary of a spacetime
region $\Omega$.

If the real $2$-form $F_{ab}=F_{[ab]}$
is a solution of the Maxwell equations
\begin{align}
\nabla^a F_{ab}&=0,\quad
\nabla_{[c}F_{ab]}=0,
\label{eq:MaxwellTensorVersion}
\end{align}
on the Schwarzschild exterior,
and $Z_{ab}$ and $\beta_a$
are defined by equations
\eqref{eq:DefZTensorVersion}-\eqref{eq:DefbetaTensorVersion}, then
\begin{align}
E_{\xi}(\Sigma_2)={}&E_{\xi}(\Sigma_1),\\
\int_{\Omega} \vert \beta_{a}\vert^2_{1,\text{deg}}
+\frac{2 M}{25 r^4}\vert Z_{ab}\vert^2_{2} d\mu_{\Omega}
\leq{}& \frac{72}{5}E_{\xi}(\Sigma_1), \label{eq:MorIneq}
\end{align}
where $E_\xi(\Sigma_i)$ is the flux \eqref{eq:Eflux} associated with $\xi^a$, and
$|\beta_a|_{1,deg}$ and $|Z_{ab}|_{2}$ are, respectively, the degenerate $1$-form norm of $\beta_a$ and the $2$-form norm of $Z_{ab}$ defined by

\begin{subequations}
\begin{align*}
\vert \beta_{a}\vert^2_{1,\text{deg}}={}&
\frac{(r - 3 M)^2}{r^3}\bigl(|\beta_{\OrthFrameX}|^2 + |\beta_{\OrthFrameY}|^2\bigr)  + \frac{M	 (r - 2 M)}{r^3}|\beta_{\OrthFrameZ}|^2
 + \frac{M  (r - 3 M)^2 (r - 2 M)}{r^5}|\beta_{\OrthFrameT}|^2,\\
\vert Z_{ab}\vert^2_{2}={}& \frac{ (r - 2 M)}{r} \mathbf{W}_{\OrthFrameT\OrthFrameT}.
\end{align*}
\end{subequations}
\end{theorem}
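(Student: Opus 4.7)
The energy identity $E_\xi(\Sigma_2)=E_\xi(\Sigma_1)$ is immediate from the conservation law $\nabla^a(\EMTensorH_{ab}\xi^b)=0$ of \eqref{eq:Jcons} (Lemma~\ref{lem:divH}), by the divergence theorem applied to $\Omega$ with oriented boundary $\Sigma_2\cup(-\Sigma_1)$. No multiplier or positivity is needed at this stage.

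For the Morawetz inequality \eqref{eq:MorIneq} I would introduce a spherically symmetric radial multiplier of the form $A^a = f(r)\OrthFrameZ^a$, and form a current of the schematic type $\MorawetzCurrJ_a = \EMTensorH_{ab}A^b + \lambda\,\mathbf{W}_{ab}\xi^b$ with a small $\lambda>0$ and a function $f$ chosen so that $f(3M)=0$, reflecting the well-known trapping obstruction at the photon sphere, together with prescribed behaviour at $r=2M$ and $r\to\infty$. The expectation is that the deformation contribution $\EMTensorH^{ab}\nabla_{(a}A_{b)}$, expanded in the orthonormal frame $\{\OrthFrameT,\OrthFrameX,\OrthFrameY,\OrthFrameZ\}$, will reproduce exactly the weights $(r-3M)^2/r^3$, $M(r-2M)/r^3$ and $M(r-3M)^2(r-2M)/r^5$ defining $|\beta_a|^2_{1,\text{deg}}$, with the $r$-dependence arising jointly from $f'$ and the Schwarzschild connection coefficients relating $\nabla_a\OrthFrameZ_b$ to the other frame vectors. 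The inhomogeneous terms $(\nabla^a\EMTensorH_{ab})A^b$ and $\lambda\,\xi^b\nabla^a\mathbf{W}_{ab}$, which are present because neither $\EMTensorH_{ab}$ nor $\mathbf{W}_{ab}$ is divergence-free, should be expanded using the Maxwell equations, the Killing--Yano identities for $Y_{ab}$, and the algebraic relations $Z_{\OrthFrameT\OrthFrameZ}=Z_{\OrthFrameX\OrthFrameY}=0$ following from the structure of $Y_{ab}$. The auxiliary $\mathbf{W}$-current is included with precisely this purpose: to absorb the cross terms coupling $\beta_a$ and $Z_{ab}$ that arise from these expansions and to leave a positive $|Z_{ab}|^2_2$ contribution in the bulk integrand.

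The main obstacle I anticipate is the pointwise inequality
\[
\nabla^a \MorawetzCurrJ_a \;\geq\; |\beta_a|^2_{1,\text{deg}} + \frac{2M}{25 r^4}|Z_{ab}|^2_2
\]
with the \emph{explicit} numerical coefficients of the theorem. After the frame decomposition this reduces to positive-definiteness of a symmetric quadratic form in the components of $\beta_a$ and $Z_{ab}$ whose entries depend on $f$, $f'$ and $\lambda$, and the optimization required to retain the coefficient $2M/25$ (with any cross terms absorbable via Cauchy--Schwarz) is what pins down both of the constants $2M/25$ and $72/5$. Once the pointwise estimate is in hand, integrating over $\Omega$ and applying Stokes' theorem produces boundary flux integrals on $\Sigma_1$ and $\Sigma_2$; the dominant energy condition for $\EMTensorH_{ab}$ and $\mathbf{W}_{ab}$ controls each such flux by a fixed multiple of an $E_\xi$-type flux, and the first part of the theorem transports the $\Sigma_2$-flux back to $\Sigma_1$, yielding the constant $72/5$ on the right-hand side and closing \eqref{eq:MorIneq}.
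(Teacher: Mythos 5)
Your first part and your overall skeleton (divergence theorem for the energy identity, a radial multiplier vanishing at $r=3M$, an auxiliary lower-order current, frame decomposition, positivity of a quadratic form, boundary-flux comparison) do match the paper's strategy. But there is a genuine gap at exactly the step you call ``the main obstacle'': the inequality $-\nabla^a\MorawetzCurrJ_a\geq |\beta_a|^2_{1,\text{deg}}+\tfrac{2M}{25r^4}|Z_{ab}|^2_2$ is \emph{not} a pointwise inequality, and no choice of $f$ and $\lambda$ can make it one. In the paper's computation (see \eqref{eq:bigdivP}) the coefficient of the undifferentiated term $\mathbf{W}_{\OrthFrameT\OrthFrameT}$ is $-\tfrac{27M^2}{2r^8}(r-5M)(r-2M)^2$, which is negative for $r>5M$; this sign change is unavoidable, since a quantity of zeroth order in $F_{ab}$ cannot be dominated pointwise by quantities of first order. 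The missing ingredient is the spherical Hardy inequality for the extreme components (Lemma~\ref{lem:hardy1}, applied as Lemma~\ref{lem:Hardy-application}): because $Z_{ab}$ has vanishing middle component, its extreme components have spin weight $\pm1$, carry no $\ell=0$ mode, and satisfy $\int_{S_r}\mathbf{W}_{\OrthFrameT\OrthFrameT}\,d\mu_{S_r}\leq\tfrac{r^2}{2}\int_{S_r}\bigl(|\beta_{\OrthFrameT}|^2+|\beta_{\OrthFrameZ}|^2\bigr)d\mu_{S_r}$. Only after integrating over spheres and trading $\mathbf{W}_{\OrthFrameT\OrthFrameT}$ for $|\beta_{\OrthFrameT}|^2+|\beta_{\OrthFrameZ}|^2$ (absorbed by a term $g(r)\bigl(|\beta_{\OrthFrameT}|^2+|\beta_{\OrthFrameZ}|^2\bigr)$ deliberately withheld from the quadratic form in \eqref{eq:longdivPE1}) does the bulk become nonnegative. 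This is precisely how the Coulomb solution is excluded locally, and the introduction flags it as the angular-derivative analogue of the radial Hardy estimate used for the middle component in earlier work.

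The same zeroth-versus-first-order issue undermines your boundary step: the dominant energy condition does not control the flux of your $\lambda\,\mathbf{W}_{ab}\xi^b$ term (nor of the paper's cross terms $\tfrac14\scalMprimary\beta^bZ_{ab}+\dots$) pointwise by an $E_\xi$-type flux; the paper again invokes the spherical Hardy estimate inside Theorem~\ref{thm:PositiveEnergy} to obtain the uniform equivalence $\tfrac1{10}E_\xi\leq E_{\xi+\vecMprimary,\scalMprimary}\leq\tfrac{19}{10}E_\xi$ of Corollary~\ref{cor:uniformequivalence}, which is where the constant $72/5=8\cdot\tfrac95$ actually comes from. Separately, your auxiliary current is structurally too weak: since $\xi^a$ is Killing, $\mathbf{W}^{ab}\nabla_{(a}\xi_{b)}=0$ and the term $\lambda\,\mathbf{W}_{ab}\xi^b$ with constant $\lambda$ contributes no new quadratic form to the bulk. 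The paper instead uses a scalar multiplier $\scalMprimary(r)$ entering through the cross terms $\tfrac14\scalMprimary\beta^bZ_{ab}+\dots$ and through $\tfrac12\mathbf{W}_{ab}\nabla^b\scalMprimary$ (the analogue of $\scalMprimary u\nabla_au-\tfrac12(\nabla_a\scalMprimary)u^2$ for the wave equation); it is these terms that shift the coefficient $b_0$ into the admissible window of Lemma~\ref{lemma:E1pos} and produce the degenerate $|\beta_{\OrthFrameT}|^2$ weight. Without the Hardy step and without cross terms of this kind, the optimization you describe cannot close.
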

\begin{remark}
The approach presented here can be extended to the inhomogenous Maxwell equations.
%The extra terms that appear can be estimated with Cauchy-Schwarz inequalities based on the norms $|\cdot|_{1,\text{deg}}$ and $|\cdot|_{2}$.
\end{remark}
 Estimates of the form \eqref{eq:MorIneq} are called Morawetz or integrated local energy decay estimates, since they show that local $L^2$ norms in space are integrable in time. Since the right-hand side of inequality \eqref{eq:MorIneq} depends only on initial data, it follows from the Morawetz estimate that the energy in stationary regions must tend sequentially to zero for large times, which demonstrates that the field disperses. In the present discussion the energy refers to the superenergies defined in terms of $\beta_a$ and $Z_{ab}$, and dispersion refers to the fact that these fields tend to
zero asymptotically in any stationary region. Although the parts of the Maxwell field controlled by $Z_{ab}$ disperse, the non-radiating Coulomb solution, which is a bound state, remains. Below, we discuss that the fields $\beta_a$ and $Z_{ab}$ vanish when evaluated on the Coulomb solution.

Recall that the standard Maxwell energy-momentum tensor, given up to a constant factor by
\begin{align*}
T_{ab} = - \tfrac{1}{2} F_{a}{}^{c} F_{bc} + \tfrac{1}{8} g_{ab} F_{cd} F^{cd},
\end{align*}
is traceless and conserved if $F_{ab}$ is a Maxwell field, see \cite[Chapter 5]{Penrose:1986fk}.
We remark that the analogue of Theorem~\ref{thm:MainResult} cannot hold if we replace $\EMTensorH_{ab}$ by $T_{ab}$, due to the fact that the energy density defined in terms of $T_{ab}$ is non-zero when evaluated on the Coulomb solution. In fact, for a radial Morawetz vector field, it is not difficult to see that the bulk term $\nabla^a (T_{ab} A^b) = T_{ab}\nabla^a A^b$ must change sign at $r=3M$, see \cite[section 7]{ABB:jubilee}.

The main motivation for this work is to develop techniques that might
be useful in approaching the Kerr stability conjecture. The Kerr
family of spacetimes is a two-parameter family of solutions of the
Einstein equation describing rotating black holes. The
Schwarzschild spacetimes make up a one-parameter subfamily corresponding
to zero rotation speed. The Kerr stability conjecture is that the Kerr
family of solutions is asymptotically stable under the Einstein
equation, although the Schwarzschild subfamily in isolation is not
expected to be stable, except when restricting to the axially symmetric case with zero angular momentum. A natural strategy is to understand solutions
of first the null geodesic equation, the wave equation, Maxwell equation,
and then some reasonable linearisation of the Einstein equation,
before approaching the full nonlinear Einstein equation.

To compare Theorem~\ref{thm:MainResult} and its proof with earlier
work in the literature, one must consider the null or spinor
decomposition of a $2$-form. To do so, one first introduces a complex
null tetrad
$(l^a, n^a, m^a, \bar{m}^a)$
normalized so that $g_{ab}l^a n^b=-g_{ab}m^a\bar{m}^b=1$ and all other inner
products are zero. For example, in the exterior of the Schwarzschild
spacetime, one can use
\begin{align}
l^a={}&\frac{1}{\sqrt{2}}(\OrthFrameT^a+\OrthFrameZ^a),&
n^a={}&\frac{1}{\sqrt{2}}(\OrthFrameT^a-\OrthFrameZ^a),&
m^a={}&\frac{1}{\sqrt{2}}(\OrthFrameX^a+i\OrthFrameY^a).
\label{eq:SchwarzschildTetrad}
\end{align}
This is a principal null tetrad.
For a $2$-form $F_{ab}$, the Newman-Penrose scalars
with respect to a null tetrad $l^a, n^a, m^a, \bar{m}^a$ (not necessarily principal) are
$$
\phi_{0} ={} F_{ab}l^am^b, \quad \phi_{1}={}
\frac{1}{2} ( F_{ab}l^an^b+F_{ab}\bar{m}^am^b) , \quad
\phi_{2}={} F_{ab}\bar{m}^a n^b .
$$
The components $\phi_{0}, \phi_{2}$ are called the extreme components and $\phi_{1}$ is the middle component. The extreme components depend on the scaling of the tetrad, while the middle component is a true scalar.
Since the vector fields
$\partial_\theta$ and $\csc\theta\partial_\phi$ are not
continuous at $\theta\in\{0,\pi\}$, the extreme components are not
smooth as functions  even if $F_{ab}$ is smooth, but are smooth when viewed as sections of an appropriate complex line bundle, cf. \cite{Penrose:1986fk}, see also \cite{Blue:Maxwell}.

A key tool in most previous work on this topic has been the existence of
second-order, decoupled equations for individual null components. In
the exterior of the Schwarzschild spacetime, the middle component $\phi_{1}$ of the Maxwell field
satisfies (after rescaling by a power of $r$) a wave equation with real potential, which is the Schwarzschild case of the Fackerell-Ipser equation \cite{FackerellIpser}.
In the Kerr spacetime, the potential in the Fackerell-Ipser equation is complex, and the extreme
components satisfy a more complicated, complex second-order hyperbolic
PDE known as the Teukolsky equation \cite{Teukolsky}. Both the
Fackerell-Ipser and Teukolsky equations have analogues arising in
certain linearisations of the Einstein equation, making them
particularly interesting as part of the strategy for approaching the
Kerr stability conjecture. In fact, the potential in the Fackerell-Ipser wave equation on the exterior of the Schwarzschild spacetime is a multiple of the potential in the Regge-Wheeler equation governing odd parity gravitational perturbations of the Schwarzschild metric, cf.  \cite{Price, donninger:schlag:soffer:2009arXiv0911.3179D}.

The Maxwell equation in the exterior of the Schwarzschild spacetime
has stationary solutions given by
\begin{align*}
F_{ab}&= 4\bigl (q_E \OrthFrameT_{[a}\OrthFrameZ_{b]} +q_B\OrthFrameX_{[a}\OrthFrameY_{b]}\bigr) r^{-2}.
\end{align*}
For these, the Maxwell scalars are respectively $\phi_{0} = 0$, $\phi_{1} = (q_E+iq_B)/r^2$,
and $\phi_{2} = 0$.
These are called the Coulomb solutions and are the only
solutions that do not decay to zero. Thus, one commonly says the
Coulomb solutions are supported in the middle component. Clearly,
these must be excluded for a decay estimate to hold.

At each point, the middle component of the $2$-form $Z_{ab}$ is zero
and its extreme components (modulo a sign) are $r$ times the
corresponding extreme component of the complex self-dual $2$-form
$F_{ab}+i(*F)_{ab}$. Thus, by working with $Z_{ab}$, we have
introduced a variable that geometrically excludes the Coulomb
solutions. Since at any point the components of $F_{ab}$ can
be freely specified while constraining $F_{ab}$ to be smooth and compactly
supported on any spacelike hypersurface $\Sigma_1$, the estimate in
Theorem~\ref{thm:MainResult} is non-vacuous.

Decay for the Maxwell equation in the exterior region of a
Schwarzschild spacetime or its generalisations has been studied in
\cite{finster:smoller:MR2471853,Blue:Maxwell,donninger:schlag:soffer:2009arXiv0911.3179D,sterbenz:tataru:2013arXiv1305.5261S,andersson:blue:maxwell:2013arXiv1310.2664A,MTT:Maxwell:2014arXiv1411.3693M}.
It has long been clear that, since $\xi^a$ is a time-like Killing
vector, there is a conserved positive energy for $F_{ab}$, and that
this can be used as a foundation for subsequent decay estimates. All
works have used both a second-order PDE for certain components and the
fact that such solutions arise from the original first-order Maxwell
equation. The paper \cite{finster:smoller:MR2471853} is the only
previous work we know of to prove decay using the Teukolsky equation;
the conclusion of that work is that at fixed $r$ the extreme components go to
zero, and the method relies on an integral representation\footnote{We have been unable to verify the claims in \cite{finster:smoller:MR2471853} concerning the spin-2 case. In particular we have been unable to verify that the energy presented in \cite[Eq. (1.4)]{finster:smoller:MR2471853}, also discussed in appendix A of that paper, is conserved. For the Maxwell case, the analogous claim follows from the conserved positive energy for $F_{ab}$.}.
In contrast, the remaining work has focused on the middle
component, which satisfies the Fackerell-Ipser equation and which
requires the contribution from the Coulomb solution to be
explicitly subtracted off.

Since the Fackerell-Ipser equation is a wave-like equation, we recall
that the last 15 years have seen the proof of Morawetz and related
decay estimates for the wave equation in the exterior of a
Schwarzschild black hole
\cite{BlueSoffer:LongPaper,BlueSoffer:Errata,BlueSterbenz,DafermosRodnianski:RedShift}
essentially using vector-field methods and following ideas introduced
for a model problem in \cite{LabaSoffer}. In the Kerr case, because of
the ergoregion generated by rotation, energy and Morawetz estimates
had to be proved simultaneously instead of sequentially. This was
first done in the very slowly rotating case
\cite{DafermosRodnianski:KerrEnergy,TataruTohaneanu,AnderssonBlue:KerrWave}
and more recently the full range of subextremal Kerr black holes
\cite{DafermosRodnianskiShlopentokhRothman}. Because of the
complicated nature of the orbiting null geodesics around Kerr black
holes, it was necessary to extend the vector-field method. In
\cite{DafermosRodnianski:KerrEnergy,DafermosRodnianskiShlopentokhRothman}
and \cite{TataruTohaneanu}, this was done by blending it with
separation-of-variable and operator-theory techniques respectively;
whereas in \cite{AnderssonBlue:KerrWave}, the first and third authors
were able to work with purely differential operators. This involved
using hidden symmetries, second-order differential operators that take
solutions to solutions but which are not decomposable into first-order
symmetries. In \cite{AnderssonBackdahlBlue}, we have explored the
analogue for the Maxwell equation.

Returning to the Maxwell problem, \cite{Blue:Maxwell} proves a
Morawetz estimate for solutions to the Fackerell-Ipser equation on the
exterior Schwarzschild spacetime and uses this to prove a Morawetz estimate and $t^{-1}$ decay
for the components of $F_{ab}$ at fixed $r$ and a hierarchy of decay
rates along null infinity (where $t\rightarrow\infty$ but $t-r$
remains bounded), and relies upon energies arising from the
vector-field method. The paper
\cite{donninger:schlag:soffer:2009arXiv0911.3179D} proves $t^{-4}$
decay at fixed $r$ for the middle component by summing Sigal-Soffer
propagation estimates for separated modes, and
\cite{sterbenz:tataru:2013arXiv1305.5261S} treats inhomogeneous
Maxwell equations on a very general class of stationary, spherically
symmetric black-hole spacetimes and the results extend beyond the
exterior region. The proof uses energies and vector-field methods
first to prove a Morawetz estimate for the middle component and then
to extend this to a Morawetz estimate for all components. The paper
\cite{MTT:Maxwell:2014arXiv1411.3693M} proves a ``black-box'' result,
in which a Morawetz estimate of the strong form proved in
\cite{sterbenz:tataru:2013arXiv1305.5261S} is assumed and shown to be
sufficient to imply a $t^{-3}$ decay rate at fixed $r$ without using
the details of the proof of the Morawetz estimate.
The paper \cite{ghanem:2014arXiv1409.8040G} similarly proves a decay result  based on the assumption of a Morawetz estimate.

In \cite{andersson:blue:maxwell:2013arXiv1310.2664A}, the first and third authors proved a
Morawetz estimate outside a very slowly rotating (and hence
non-spherically symmetric) Kerr black hole. So far, it has not been
possible to prove decay estimates for the Maxwell equation without
passing to a second-order equation. Energies for second-order equations
are naturally at the $H^1$ level rather than the $L^2$ level for
first-order equations. In the Schwarzschild case, the Fackerell-Ipser equation has a conserved, non-negative energy, so there was no
obstacle.
However, for the Fackerell-Ipser equation on the Kerr spacetime, the potential is complex, which destroys the conservation structure. This could be
compensated for, but only by introducing fractional derivative
operators (more precisely, fractional powers of the separation constants
after separation of variables). It seems reasonable to imagine that
the original $L^2$ type energy could equally well be combined with a
fractional derivative operator to prove a similar result.

In this paper, we develop a new method for treating the Maxwell equation which does not require introducing a second-order equation. This opens the possibility of applying the differential hidden symmetry operators from \cite{AnderssonBackdahlBlue}, which might allow for the proof of decay estimates in the Kerr case without using fractional derivative operators or fractional powers of separation constants.

The principal variables in this paper are $Z_{ab}$ and $\beta_a$, which are
constructed from the extreme components, which satisfy the Teukolsky
equations, rather than the middle component, which in the case we consider  satisfies the Schwarzschild case of the
Fackerell-Ipser wave equation.
We believe this has two major advantages.
First, it provides a local way to exclude the Coulomb solutions,
without integrating over spheres. Second, we have recently found a new
conservation law for the Maxwell equation
\cite{AnderssonBackdahlBlue:NewConservationLaw:2014arXiv1412.2960A} which generates $H^1$
level energies. Curiously, although $Z_{ab}$ in this paper and
in \cite{AnderssonBackdahlBlue:NewConservationLaw:2014arXiv1412.2960A} are the same, the
auxiliary $1$-forms $\beta_a$ and $\eta_a$ and the associated
symmetric $2$-tensors $\EMTensorH_{ab}$ and $\EMTensorV_{ab}$
differ. Nonetheless, in the sense given in
\cite{AnderssonBackdahlBlue:NewConservationLaw:2014arXiv1412.2960A}, the leading-order
parts of $\EMTensorH_{ab}$ and the conserved $\EMTensorV_{ab}$ are the
same. We are currently investigating how to replace $\EMTensorH_{ab}$
by $\EMTensorV_{ab}$ so that the argument in this paper can be extended to the rotating Kerr case.

By applying the Maxwell equations, the variable $\beta_a$ can be transformed from an expression in the derivatives of the extreme components to an expression purely in terms of the middle component. Recall that in GHP notation, the Maxwell field, $F_{ab}$, is decomposed into $\phi_0$, $\phi_1$, $\phi_2$, and we refer to $\phi_1$ as the middle component. In particular, $\beta_a=\nabla_a \Upsilon - U_a \Upsilon$, where $\Upsilon$ is $r$ times the middle component of $F_{ab}$. Similarly, one might view $\EMTensorH_{ab}$ as a type of energy-momentum tensor (albeit not conserved) for the Fackerell-Ipser equation. Following this, one might be tempted to conclude that Theorem~\ref{thm:MainResult} follows immediately from the Maxwell equation and the results about the middle component in \cite{Blue:Maxwell} or \cite{sterbenz:tataru:2013arXiv1305.5261S}. We emphasise that such a naive interpretation is false. First, Theorem~\ref{thm:MainResult} includes undifferentiated factors of $\phi_0$ and $\phi_2$, which arise from the components of $Z_{ab}$ and which cannot be recovered from the middle component using local operators.  Second, in \cite{Blue:Maxwell}, a Hardy estimate in the radial derivative was applied to the middle component; in Section~\ref{sec:Hardy}, a Hardy estimate in angular derivatives is applied to the extreme components. Morawetz estimates typically require a Hardy estimate to obtain a globally positive coefficient for the undifferentiated terms.

The argument in this paper very closely follows the argument used for
the wave equation. From a vector field $\vecMprimary^a$ and a scalar
field $\scalMprimary$, we define the a current $\MorawetzCurrJ_a$ in terms of $Z_{ab}$ and $\beta_a$ by\footnote{This can be compared with the corresponding expression for the wave equation,
\begin{align*}
\MorawetzCurrJ_a=\EMTensorT[u]_{ab}\vecMprimary^{b}+\scalMprimary
  u\nabla_a u -(\nabla_a\scalMprimary)u^2/2,
\end{align*} where
  $\EMTensorT[u]_{ab}$ is the energy-momentum tensor for the wave
equation. Thus, we use $\EMTensorH_{ab}$ in place of the wave
equation's energy-momentum tensor, and we use the terms involving $q$ in   \eqref{eq:MorawetzMomentum}
in place of $\scalMprimary u\nabla_a u
-(\nabla_a\scalMprimary)u^2/2$.}
\begin{align}
\MorawetzCurrJ_a={}&\EMTensorH_{ab} \vecMprimary^{b}
+ \tfrac{1}{4} \scalMprimary \beta^{b} Z_{ab}
 + \tfrac{1}{4} \scalMprimary \bar{\beta}^{b} Z_{ab}
 -  \tfrac{1}{4}i \scalMprimary \beta^{b} {*}Z_{ab}
 + \tfrac{1}{4}i \scalMprimary \bar{\beta}^{b} {*}Z_{ab}
\nonumber \\%*
&+ \tfrac{1}{16} Z_{bc} Z^{bc} \nabla_{a}\scalMprimary
  -  \tfrac{1}{4} Z_{a}{}^{c} Z_{bc} \nabla^{b}\scalMprimary .
\label{eq:MorawetzMomentum}
\end{align}

We now choose
\begin{subequations}
\begin{align}
\vecMprimary^a={}&\frac{(r - 3 M) (r - 2 M)}{2 r^2}(\partial_r)^a, \label{eq:Adef} \\
\scalMprimary ={}&\frac{9 M^2 (r - 2 M) (2 r - 3 M)}{4 r^5}. \label{eq:qdef}
\end{align}
\end{subequations}
The motivation for this choice is explained in the proof of
Theorem~\ref{thm:MorawetzEst}.
The vector field $\vecMprimary^a$ is the same as occurs in
the analysis of the wave equation.
%This is to be expected, since it is chosen to have the geometric property that it changes sign at $r=3M$,
%the unique radius at which there are orbiting null geodesics in the Schwarzschild spacetime.
In particular, it has the geometric property that it points away from the orbiting null geodesics, which in the Schwarzschild spacetime are located at $r = 3M$.

For any spacelike hypersurface $\Sigma$, we define
\begin{align}
E_\xi(\Sigma)={}&\int_{\Sigma}	\EMTensorH_{ab}\xi^{b}
N^{a} \mbox{d}\mu_{\Sigma}, \label{eq:Exidef}\\
E_{\xi+\vecMprimary,\scalMprimary}(\Sigma)
={}&\int_{\Sigma}  \left(\EMTensorH_{ab}\xi^b+\MorawetzCurrJ_a\right)
N^{a} \mbox{d}\mu_{\Sigma}. \label{eq:ExiAqdef}
\end{align}
As discussed above,
$E_{\xi}(\Sigma)$ is nonnegative and conserved.
In Section~\ref{sec:posenergy} we show that on spacelike, spherically symmetric hypersurfaces, the energies $E_\xi(\Sigma)$ and $ E_{\xi +\vecMprimary, \scalMprimary}(\Sigma) $ are uniformly equivalent.
In Section~\ref{sec:integrateddecay}, we show that the integral over any spherically
symmetric region $\Omega$ of $-\nabla_a\MorawetzCurrJ^a$ dominates the integral on the left hand side of \eqref{eq:MorIneq}. The proof of this fact relies on a spherical Hardy estimate, see  Lemma~\ref{lem:Hardy-application}.
Finally these facts are combined in Section~\ref{sec:mainthmproof} to yield a proof of Theorem \ref{thm:MainResult}.

\section{Preliminaries} \label{sec:prel}
For the remainder of this paper, we will make use of the 2-spinor
formalism, following the conventions of \cite{Penrose:1986fk}.
The exterior Schwarzschild spacetime is oriented and globally hyperbolic and hence also spin.

The spin group is $\SL(2,\Co)$ which has the inequivalent spinor representations $\Co^2$ and $\bar{\Co}^2$.  Unprimed upper case latin indices and their primed versions are used for sections of the corresponding spinor bundles, respectively. The correspondence between spinors and tensors makes it possible to translate all tensor expressions to spinor form. The action of $\SL(2,\Co)$ on $\Co^2$ leaves invariant the spin metric $\eps_{AB} = \eps_{[AB]}$, which is used to raise and lower indices on tensors.  The metric $g_{ab}$ is related to $\eps_{AB}$ by $g_{ab} = \eps_{AB} \bar\eps_{A'B'}$, and the scalar and Weyl curvatures are represented by the spinor fields $\Lambda, \Psi_{ABCD}$, respectively.
We denote  the
space of symmetric spinors with $k$ unprimed indices and $l$
primed indices by $\mathcal{S}_{k,l}$.

The principal null tetrad $l^a, n^a, m^a, \bar{m}^a$ corresponds to a principal spin dyad $o_A, \iota_A$ via $l_a = o_A \bar{o}_{A'}, n_a = \iota_A \bar{\iota}_A, m_a = o_A \bar{\iota}_{A'}, \bar{m}_a = \iota_A \bar{o}_{A'}$. Given a symmetric spinor field $\varphi_{A \cdots D A' \cdots D'}$,  we denote the dyad components by $\phi_{ii'}$ where $i,i'$ denote the number of contractions with $\iota^A, \bar{\iota}^{A'}$, respectively. For example, a 1-form $\varphi_{AA'}$, has components $\varphi_{00'} = \varphi_{AA'} o^A \bar{o}^{A'}, \cdots, \varphi_{11'} = \varphi_{AA'} \iota^{A}\bar{\iota}^{A'}$.

The 2-spinor formalism makes it possible to systematically exploit
decompositions in terms of irreducible representations of the spin
group $\SL(2,\Co)$. For the spinor calculations, we have made use of the \emph{SymManipulator} package \cite{Bae11a}, developed by one of the
authors (T.B.) for the Mathematica based symbolic differential
geometry suite \emph{xAct} \cite{xAct}.

\subsection{Fundamental operators}
\begin{definition}[\protect{\cite[Definition 13]{AnderssonBackdahlBlue}}]
For any $\varphi_{A_1\dots A_k}{}^{A_{1}'\dots A_{l}'}\in \mathcal{S}_{k,l}$, we define the operators
$\sDiv_{k,l}:\mathcal{S}_{k,l}\rightarrow \mathcal{S}_{k-1,l-1}$,
$\sCurl_{k,l}:\mathcal{S}_{k,l}\rightarrow \mathcal{S}_{k+1,l-1}$,
$\sCurlDagger_{k,l}:\mathcal{S}_{k,l}\rightarrow \mathcal{S}_{k-1,l+1}$ and
$\sTwist_{k,l}:\mathcal{S}_{k,l}\rightarrow \mathcal{S}_{k+1,l+1}$ as
\begin{align*}
(\sDiv_{k,l}\varphi)_{A_1\dots A_{k-1}}{}^{A_1'\dots A_{l-1}'}\equiv{}&
\nabla^{BB'}\varphi_{A_1\dots A_{k-1}B}{}^{A_1'\dots A_{l-1}'}{}_{B'},\\
(\sCurl_{k,l}\varphi)_{A_1\dots A_{k+1}}{}^{A_1'\dots A_{l-1}'}\equiv{}&
\nabla_{(A_1}{}^{B'}\varphi_{A_2\dots A_{k+1})}{}^{A_1'\dots A_{l-1}'}{}_{B'},\\
(\sCurlDagger_{k,l}\varphi)_{A_1\dots A_{k-1}}{}^{A_1'\dots A_{l+1}'}\equiv{}&
\nabla^{B(A_1'}\varphi_{A_1\dots A_{k-1}B}{}^{A_2'\dots A_{l+1}')},\\
(\sTwist_{k,l}\varphi)_{A_1\dots A_{k+1}}{}^{A_1'\dots A_{l+1}'}\equiv{}&
\nabla_{(A_1}{}^{(A_1'}\varphi_{A_2\dots A_{k+1})}{}^{A_2'\dots A_{l+1}')}.
\end{align*}
\end{definition}
The operator $\sDiv_{k,l}$ only makes sense when $k\geq 1$ and $l\geq 1$. Likewise $\sCurl_{k,l}$ is
defined only if $l \geq 1$ and	$\sCurlDagger_{k,l}$ only if $k\geq 1$.
From the definition it is clear that the complex conjugates of
$(\sDiv_{k,l}\varphi)$,
$(\sCurl_{k,l}\varphi)$,
$(\sCurlDagger_{k,l}\varphi)$ and
$(\sTwist_{k,l}\varphi)$ are
$(\sDiv_{l,k}\bar\varphi)$,
$(\sCurlDagger_{l,k}\bar\varphi)$,
$(\sCurl_{l,k}\bar\varphi)$ and
$(\sTwist_{l,k}\bar\varphi)$ respectively, with the appropriate indices.
With these definitions, a Killing spinor of valence $(k,l)$ is an element
$\varkappa_{A\cdots F}{}^{ A'\dots F'} \in \ker \sTwist_{k,l}$.

The fundamental operators appear naturally in the irreducible decomposition of the covariant derivative of a symmetric spinor field, see \cite[Lemma 15]{AnderssonBackdahlBlue}. In most calculations we freely make use of such decompositions. We shall make use of the following commutator relations for the fundamental operators.

\begin{lemma}[\protect{\cite[Lemma 18]{AnderssonBackdahlBlue}}]\label{lemma:commutators}
Let $\varphi_{AB} \in \mathcal{S}_{2,0}$, and $\varsigma$ a scalar. The operators $\sDiv$, $\sCurl$, $\sCurlDagger$ and $\sTwist$ satisfies the following commutator relations
\begin{subequations}
\begin{align}
(\sCurl_{1,1} \sTwist_{0,0} \varsigma)_{AB} ={}& 0,\label{eq:CurlTwist0}\\
(\sCurlDagger_{1,1} \sTwist_{0,0} \varsigma)_{A'B'} ={}& 0,\label{eq:CurlDaggerTwist0}\\
(\sCurl_{3,1} \sTwist_{2,0} \varphi)_{ABCD}={}&2 \Psi_{(ABC}{}^{F}\varphi_{D)F},\label{eq:CurlTwist}\\
(\sDiv_{3,1} \sTwist_{2,0} \varphi)_{AB}={}&
-  \tfrac{4}{3} (\sCurl_{1,1} \sCurlDagger_{2,0} \varphi)_{AB}
-8 \Lambda \varphi_{AB}
 + 2 \Psi_{ABCD} \varphi^{CD}.\label{eq:DivTwistCurlCurlDagger}
\end{align}
\end{subequations}
\end{lemma}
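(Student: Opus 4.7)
The strategy is purely algebraic and local: unfold each composition of fundamental operators into a nested covariant derivative of the argument with the required symmetrizations, and then apply the standard spinor Ricci identities to convert antisymmetrized adjacent derivatives into curvature contributions built from the Weyl spinor $\Psi_{ABCD}$ and the scalar $\Lambda$. The basic tool is the spinor box $\Box_{AB} = \nabla^{A'}{}_{(A}\nabla_{B)A'}$, whose action on any spinor is pure curvature. Identities \eqref{eq:CurlTwist0} and \eqref{eq:CurlDaggerTwist0} follow immediately from this observation: since $(\sTwist_{0,0}\varsigma)_{AA'}=\nabla_{AA'}\varsigma$, we have $(\sCurl_{1,1}\sTwist_{0,0}\varsigma)_{AB}=\nabla_{(A}{}^{B'}\nabla_{B)B'}\varsigma = \Box_{AB}\varsigma = 0$, because curvature acts trivially on a scalar; the conjugate argument yields \eqref{eq:CurlDaggerTwist0}.

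For \eqref{eq:CurlTwist}, I would unpack the composition as $(\sCurl_{3,1}\sTwist_{2,0}\varphi)_{ABCD} = \nabla_{(A}{}^{B'}\nabla_{B|B'|}\varphi_{CD)}$, where the outer four-index symmetrization automatically promotes the inner derivative pair to the symmetric-in-$(A,B)$ combination, reducing the expression to $\Box_{(AB}\varphi_{CD)}$. The Ricci identity for $\Box_{AB}$ on a valence-$(2,0)$ symmetric spinor produces two Weyl contributions, one for each index of $\varphi$, plus $\Lambda$-terms carrying an $\epsilon$-contraction between an outer symmetrized index and an index of $\varphi$. The latter vanish under the full $(ABCD)$-symmetrization, while the two Weyl terms coincide after the symmetrization in $C \leftrightarrow D$, giving precisely $2\Psi_{(ABC}{}^{F}\varphi_{D)F}$.

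For \eqref{eq:DivTwistCurlCurlDagger}, I would start from $(\sDiv_{3,1}\sTwist_{2,0}\varphi)_{AB} = \nabla^{CB'}\nabla_{(C|B'|}\varphi_{AB)}$ and expand the inner symmetrization over $\{C,A,B\}$ into three pieces: the piece where the outer $\nabla^{C}$ contracts with the $\sTwist$ slot produces $\tfrac{1}{3}$ of the spinor wave operator applied to $\varphi_{AB}$, while the two pieces where it contracts with an index of $\varphi$ reassemble, by definition of $\sCurlDagger_{2,0}$ followed by $\sCurl_{1,1}$, into a multiple of $(\sCurl_{1,1}\sCurlDagger_{2,0}\varphi)_{AB}$. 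Commuting the derivatives inside the wave-operator piece via the spinor Ricci identity for valence-$(2,0)$ spinors generates the remaining two terms: the Weyl contribution $2\Psi_{ABCD}\varphi^{CD}$ and the $-8\Lambda\varphi_{AB}$ scalar-curvature contribution. The main obstacle, and the real content of the calculation, is the bookkeeping of the rational coefficients (the $-\tfrac{4}{3}$ and the $-8$) produced by the intertwined symmetrizations; the cleanest way to pin them down is to expand $\nabla_{AA'}\nabla_{BB'}\varphi_{CD}$ into its full irreducible decomposition under $\mathrm{SL}(2,\mathbb{C})\times\overline{\mathrm{SL}(2,\mathbb{C})}$ and match both sides representation by representation, reducing the verification to the scalar commutator identities already at our disposal.
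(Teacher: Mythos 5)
The paper gives no proof of this lemma: it is imported verbatim by citation from Lemma~18 of \cite{AnderssonBackdahlBlue}, so there is no in-paper argument to compare against. Your derivation --- unfolding each composition, observing that the symmetrizations isolate the zeroth-order curvature operator $\Box_{AB}=\nabla_{X'(A}\nabla_{B)}{}^{X'}$, and applying the spinor Ricci identities with curvature spinor $\Psi_{ABCD}+\Lambda(\epsilon_{AC}\epsilon_{BD}+\epsilon_{AD}\epsilon_{BC})$, together with the irreducible decomposition of $\nabla_{AA'}\nabla_{BB'}$ to pin down the rational coefficients --- is the standard route and is essentially the proof in the cited reference; the only remaining care is with the sign conventions relating $\nabla_{(A}{}^{B'}\nabla_{B)B'}$ to $\pm\Box_{AB}$, which determine the signs of the $\Psi$- and $\Lambda$-terms, and the actual verification of the factors $-\tfrac{4}{3}$ and $-8$, which your outline correctly locates but does not carry out.
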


\subsection{Hardy estimate}\label{sec:Hardy}
Recall the scalar components of spinors transform under tetrad rescalings as sections of certain complex line bundles. The GHP operators $\tho, \tho', \edt, \edt'$ are defined	 on such scalars as the appropriate covariant derivative along the null tetrad legs $l^a,n^a, m^a, \bar{m}^a$, respectively, see \cite{Penrose:1986fk} for details.

\begin{lemma}\label{lem:hardy1} On any sphere with constant $r$ in the Schwarzschild spacetime, with $\varphi_{0}$ and $\varphi_{2}$ the extreme components of a smooth symmetric spinor field $\varphi_{AB}$ we have
\begin{subequations}
\begin{align}
\int_{S_r} |\varphi_{0}|^2 d\mu_{S_r}
\leq{}& r^2\int_{S_r} |\edt' \varphi_{0}|^2 \mbox{d}\mu_{S_r},\label{eq:HardyTheta0}\\
\int_{S_r} |\varphi_{2}|^2 d\mu_{S_r}
\leq{}& r^2\int_{S_r} |\edt \varphi_{2}|^2 \mbox{d}\mu_{S_r}.\label{eq:HardyTheta2}
\end{align}
\end{subequations}
\end{lemma}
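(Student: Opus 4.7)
The plan is to recognize the sphere $S_r$ (with its induced metric from Schwarzschild) as a round two-sphere of radius $r$, on which the extreme components $\varphi_0 = \varphi_{AB} o^A o^B$ and $\varphi_2 = \varphi_{AB}\iota^A\iota^B$ are smooth sections of the spin-weight $+1$ and $-1$ complex line bundles respectively. The GHP operators $\edt,\edt'$ restrict to operators raising/lowering spin weight by one; in the adapted principal null tetrad \eqref{eq:SchwarzschildTetrad}, $m^a$ restricted to a constant-$r$ sphere gives the standard round-sphere expression $m = \tfrac{1}{r\sqrt{2}}(\partial_\theta + i\csc\theta\,\partial_\phi)$ together with the usual round-sphere spin coefficients. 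The inequalities \eqref{eq:HardyTheta0}, \eqref{eq:HardyTheta2} then reduce to a Poincar\'e-type lower bound for the angular operator acting on spin-weighted sections.

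I would carry out the argument by decomposing in spin-weighted spherical harmonics $\{{}_{s}Y_{lm}\}_{l\geq |s|}$, which form a complete orthonormal $L^2$ basis of the relevant line bundle:
\[
\varphi_0 = \sum_{l\geq 1,\,|m|\leq l} a_{lm}\,{}_{1}Y_{lm},\qquad
\varphi_2 = \sum_{l\geq 1,\,|m|\leq l} b_{lm}\,{}_{-1}Y_{lm}.
\]
The crucial point is that the lowest admissible mode is $l=|s|=1$, since there are no spin-weight-$\pm 1$ constants on $S^2$. Invoking the standard raising/lowering formulas, for instance $\edt'\,{}_{1}Y_{lm} = -r^{-1}\sqrt{l(l+1)}\,{}_{0}Y_{lm}$ and $\edt\,{}_{-1}Y_{lm} = r^{-1}\sqrt{l(l+1)}\,{}_{0}Y_{lm}$ (up to overall signs), together with Parseval's identity, gives
\[
\int_{S_r}|\edt'\varphi_0|^2\,d\mu_{S_r} = \frac{1}{r^2}\sum_{l\geq 1,\,m}l(l+1)|a_{lm}|^2 \;\geq\; \frac{2}{r^2}\int_{S_r}|\varphi_0|^2\,d\mu_{S_r},
\]
and analogously for $\varphi_2$. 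Since $l(l+1)\geq 2$ for $l\geq 1$, both bounds follow immediately, in fact with a factor of $2$ to spare.

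A more self-contained variant, avoiding any explicit spectral expansion, is to integrate by parts on the closed surface $S_r$ and invoke the GHP commutator identity $[\edt,\edt']\eta = (2s/r^2)\eta$ for a spin-weight-$s$ quantity $\eta$ on a round sphere of radius $r$; this is a direct computation from the Schwarzschild principal-tetrad spin coefficients restricted to $S_r$. One then obtains
\[
\int_{S_r}|\edt'\varphi_0|^2\,d\mu_{S_r} - \int_{S_r}|\edt\varphi_0|^2\,d\mu_{S_r} = \frac{2}{r^2}\int_{S_r}|\varphi_0|^2\,d\mu_{S_r},
\]
and discarding the nonnegative term $\int|\edt\varphi_0|^2$ produces \eqref{eq:HardyTheta0}, while \eqref{eq:HardyTheta2} is dual under complex conjugation. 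The only real obstacle I anticipate is bookkeeping with GHP sign and normalization conventions consistent with \cite{Penrose:1986fk}; the analytic content reduces to the observation that the smallest eigenvalue of the positive elliptic angular operator on the spin-weight $\pm 1$ bundle over $S_r$ is $2/r^2$, which is comfortably above the $1/r^2$ required.
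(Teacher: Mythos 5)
Your first argument is essentially the paper's own proof: both expand $\varphi_0,\varphi_2$ in spin-weighted spherical harmonics starting at $l=1$ (no spin-weight-$\pm1$ mode below that), apply the $\edt,\edt'$ raising/lowering formulas, and conclude via Parseval from $l(l+1)\geq 2$. The one caveat is your claimed ``factor of $2$ to spare'': in the Penrose--Rindler normalization the paper uses, $\edt'\,{}_{1}Y_{lm}$ carries the factor $\sqrt{l(l+1)}/(\sqrt{2}\,r)$, so the relevant eigenvalue is $l(l+1)/(2r^2)$, which equals exactly $1/r^2$ at $l=1$ --- the constant $r^2$ in the lemma is sharp, and the apparent slack (likewise the constant $2s/r^2$ in your commutator variant) is an artifact of a unit-sphere normalization of $\edt$ rather than the one consistent with \cite{Penrose:1986fk}.
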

\begin{proof}
Expand $\varphi_{0}$ and $\varphi_{2}$ in terms of spin-weighted spherical harmonics (see \cite[Section 4.15]{Penrose:1986fk})
\begin{subequations}
\begin{align}
\varphi_{0}(\theta,\phi)={}&\sum_{l=1}^\infty\sum_{m=-l}^{l} a_{l,m}\thinspace {}_1 Y_{l,m}(\theta,\phi), \\
\varphi_{2}(\theta,\phi)={}&\sum_{l=1}^\infty\sum_{m=-l}^{l} b_{l,m}\thinspace {}_{-1} Y_{l,m}(\theta,\phi). \end{align}
\end{subequations}
From \cite[Eq. (4.15.106)]{Penrose:1986fk} we have
\begin{subequations}
\begin{align}
\edt' \varphi_{0}(\theta,\phi)={}&\sum_{l=1}^\infty\sum_{m=-l}^{l} a_{l,m} \frac{\sqrt{l(l+1)}}{\sqrt{2}r}{}_0 Y_{l,m}(\theta,\phi),\\
\edt \varphi_{2}(\theta,\phi)={}&-\sum_{l=1}^\infty\sum_{m=-l}^{l} b_{l,m} \frac{\sqrt{l(l+1)}}{\sqrt{2}r}{}_{0} Y_{l,m}(\theta,\phi).
\end{align}
\end{subequations}
Through the orthogonality conditions \cite[Eq. (4.15.99)]{Penrose:1986fk} we get
\begin{subequations}
\begin{align}
\int_{S_r} |\varphi_{0}|^2 \mbox{d}\mu_{S_r}={}&4\pi \sum_{l=1}^\infty\sum_{m=-l}^{l} |a_{l,m}|^2,\\
\int_{S_r} |\varphi_{2}|^2 \mbox{d}\mu_{S_r}={}&4\pi \sum_{l=1}^\infty\sum_{m=-l}^{l} |b_{l,m}|^2,\\
\int_{S_r} |\edt' \varphi_{0}|^2 \mbox{d}\mu_{S_r}={}&4\pi \sum_{l=1}^\infty\sum_{m=-l}^{l} |a_{l,m}|^2 \frac{l(l+1)}{2r^2},\\
\int_{S_r} |\edt \varphi_{2}|^2 \mbox{d}\mu_{S_r}={}&4\pi \sum_{l=1}^\infty\sum_{m=-l}^{l} |b_{l,m}|^2 \frac{l(l+1)}{2r^2}.
\end{align}
\end{subequations}
This proves the desired inequalities.
\end{proof}

\section{A first order superenergy for the Maxwell field}
\subsection{General spacetime} In this subsection we shall work in a general spacetime with a Killing spinor $\kappa_{AB}$ of valence $(2,0)$ satisfying $(\sTwist_{2,0} \kappa)_{ABCA'} = 0$ and such that $\kappa_{AB} \kappa^{AB} \ne 0$. Recall that a Maxwell field $F_{ab}$ corresponds to a symmetric spinor of valence $(2,0)$ by
$$
F_{ab}={} \bar\epsilon_{A'B'} \phi_{AB}
 + \epsilon_{AB} \bar{\phi}_{A'B'}.
$$
The source-free Maxwell equation takes the form
$
(\sCurlDagger_{2,0} \phi)_{AA'} = 0 .
$
Instead of constructing energy-momentum tensors from the Maxwell field itself, we will use a first order expression.
In the paper \cite{AnderssonBackdahlBlue:NewConservationLaw:2014arXiv1412.2960A} we have introduced a conserved	 tensor for the Maxwell field on any spacetime admitting a valence $(2,0)$ Killing spinor with aligned matter. Here we shall take a different approach and construct a superenergy tensor satisfying the dominant energy condition which is however not conserved.
This tensor is constructed from a quantity $\beta_{AA'}$, which can be defined for any spacetime with an algebraically general valence $(2,0)$ Killing spinor. We remark that the leading order term in the tensor constructed here agrees with the tensor constructed in \cite{AnderssonBackdahlBlue:NewConservationLaw:2014arXiv1412.2960A}.

As we shall discuss later, the spinor fields introduced in the following definition correspond to the tensor fields given in the introduction.
 \begin{definition}\label{def:beta}
Assume that $\kappa_{AB}$ is a valence $(2,0)$ Killing spinor, such that $\kappa_{CD} \kappa^{CD}\neq 0$, $\phi_{AB}$ a solution to the source-free Maxwell equation $(\sCurlDagger_{2,0} \phi)_{AA'} = 0$, and define
\begin{subequations}
\begin{align}
\xi_{AA'}={}&(\sCurlDagger_{2,0} \kappa)_{AA'},\\
U_{AA'}={}&- \tfrac{1}{2} \nabla_{AA'}\log(-\kappa_{CD} \kappa^{CD}),\\
\Upsilon={}&\kappa^{AB} \phi_{AB},\\
\Theta_{AB}={}&-2 \kappa_{(A}{}^{C}\phi_{B)C},\\
\beta_{AA'}={}&(\sCurlDagger_{2,0} \Theta)_{AA'}
+U^{B}{}_{A'} \Theta_{AB}.
\end{align}
\end{subequations}
\end{definition}

\begin{lemma}\label{lem:betaprop}
Definition~\ref{def:beta} implies the relations
\begin{subequations}
\begin{align}
\beta_{AA'}={}&- U_{AA'} \Upsilon
 + (\sTwist_{0,0} \Upsilon)_{AA'},\label{eq:betaToUpsilon}\\
(\sDiv_{1,1} \beta)={}&- U^{AA'} \beta_{AA'},\label{eq:Divbeta}\\
(\sCurl_{1,1} \beta)_{AB}={}&U_{(A}{}^{A'}\beta_{B)A'},\label{eq:Curlbeta}\\
(\sCurlDagger_{1,1} \beta)_{A'B'}={}&U^{A}{}_{(A'}\beta_{|A|B')}.\label{eq:CurlDgbeta}
\end{align}
\end{subequations}
\end{lemma}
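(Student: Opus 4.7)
The plan is to first establish the pointwise identity \eqref{eq:betaToUpsilon}, and then obtain the three differential identities \eqref{eq:Divbeta}--\eqref{eq:CurlDgbeta} by applying $\sDiv_{1,1}$, $\sCurl_{1,1}$, $\sCurlDagger_{1,1}$ to \eqref{eq:betaToUpsilon} and exploiting the commutators in Lemma~\ref{lemma:commutators}.

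For \eqref{eq:betaToUpsilon}, the plan is to expand both sides in terms of $\kappa$, $\phi$, and $\xi_{AA'}$ and match them. On the right-hand side, Leibniz gives $\nabla_{AA'}\Upsilon = (\nabla_{AA'}\kappa^{BC})\phi_{BC} + \kappa^{BC}\nabla_{AA'}\phi_{BC}$; on the left-hand side, applying $\sCurlDagger_{2,0}$ term by term to $\Theta_{AB}=-2\kappa_{(A}{}^C\phi_{B)C}$ produces analogous $\nabla\kappa\cdot\phi$ and $\kappa\cdot\nabla\phi$ contractions. Two facts drive the simplification: the Maxwell equation $(\sCurlDagger_{2,0}\phi)_{AA'}=0$ annihilates the traced $\kappa\cdot\nabla\phi$ pieces, while the Killing spinor equation $(\sTwist_{2,0}\kappa)=0$ forces
\[
\nabla_{AA'}\kappa_{BC} = -\tfrac{1}{3}\bigl(\epsilon_{AB}\xi_{CA'} + \epsilon_{AC}\xi_{BA'}\bigr),
\]
so every $\nabla\kappa$ contraction becomes linear in $\xi$. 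The algebraic decomposition $\kappa_A{}^C\phi_{BC}=-\tfrac{1}{2}\Theta_{AB}+\tfrac{1}{2}\epsilon_{AB}\Upsilon$ then converts the remaining $\kappa\cdot\phi$ products into combinations of $\Theta$ and $\Upsilon$, while contracting $\kappa^{CD}$ against the displayed decomposition of $\nabla\kappa$ and comparing with the definition of $U$ yields the auxiliary identity $(-\kappa^{CD}\kappa_{CD})U_{AA'}=\tfrac{2}{3}\kappa_A{}^C\xi_{CA'}$, which absorbs the undifferentiated $U\cdot\Theta$ and $U\cdot\Upsilon$ terms so that both sides of \eqref{eq:betaToUpsilon} coincide.

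For \eqref{eq:Curlbeta} and \eqref{eq:CurlDgbeta}, note that $U_{AA'}=(\sTwist_{0,0}f)_{AA'}$ with $f=-\tfrac{1}{2}\log(-\kappa^{CD}\kappa_{CD})$, so by \eqref{eq:CurlTwist0} and \eqref{eq:CurlDaggerTwist0} we have $(\sCurl_{1,1}U)_{AB}=0$ and $(\sCurlDagger_{1,1}U)_{A'B'}=0$, and the same commutators annihilate $\sTwist_{0,0}\Upsilon$. Applying $\sCurl_{1,1}$ to \eqref{eq:betaToUpsilon} and distributing by Leibniz gives $(\sCurl_{1,1}\beta)_{AB}=-U_{(B|A'|}(\sTwist_{0,0}\Upsilon)_{A)}{}^{A'}$. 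Substituting $\sTwist_{0,0}\Upsilon=\beta+U\Upsilon$, the $\Upsilon U\cdot U$ contribution vanishes since the tensor $U_A{}^{A'}U_{BA'}$ is antisymmetric in $A,B$ by $\chi^{A'}\psi_{A'}=-\chi_{A'}\psi^{A'}$; applying the same sign identity to reshuffle indices in the $\beta$-dependent piece produces \eqref{eq:Curlbeta}. The derivation of \eqref{eq:CurlDgbeta} is the completely analogous argument with primed indices interchanged.

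The main obstacle is the divergence identity \eqref{eq:Divbeta}, since $\sDiv_{1,1}\sTwist_{0,0}$ acting on a scalar is the d'Alembertian $\Box$ and does not vanish. Applying $\sDiv_{1,1}$ to \eqref{eq:betaToUpsilon} and using the trace swap $U_{AA'}\nabla^{AA'}\Upsilon=U^{AA'}\nabla_{AA'}\Upsilon$, the claim \eqref{eq:Divbeta} reduces to the scalar identity $\Box\Upsilon = \Upsilon\bigl[\nabla^{AA'}U_{AA'} + U^{AA'}U_{AA'}\bigr]$, equivalently $\Box\bigl((-\kappa^{CD}\kappa_{CD})^{-1/2}\Upsilon\bigr)=0$; this is precisely the Fackerell--Ipser wave equation for the rescaled middle component. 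I would verify it by substituting $\Upsilon=\kappa^{AB}\phi_{AB}$, commuting the two derivatives through the spinor Ricci identity (which produces Weyl spinor and $\Lambda$ curvature terms), and applying the Killing spinor equation, the Maxwell equation, and the commutator \eqref{eq:DivTwistCurlCurlDagger}; the curvature contributions should cancel against the derivatives of the conformal weight $(-\kappa^{CD}\kappa_{CD})^{-1/2}$ owing to the algebraic constraints implied by the existence of the Killing spinor.
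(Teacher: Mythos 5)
Your proposal is correct and follows essentially the same route as the paper: establish \eqref{eq:betaToUpsilon} from the Killing spinor and Maxwell equations, obtain \eqref{eq:Curlbeta}--\eqref{eq:CurlDgbeta} from the commutators \eqref{eq:CurlTwist0}--\eqref{eq:CurlDaggerTwist0} together with $(\sCurl_{1,1}U)_{AB}=0=(\sCurlDagger_{1,1}U)_{A'B'}$, and reduce \eqref{eq:Divbeta} to the scalar identity $(\sDiv_{1,1}\sTwist_{0,0}\Upsilon)=\Upsilon\bigl((\sDiv_{1,1}U)+U^{AA'}U_{AA'}\bigr)$, which is precisely the paper's Fackerell--Ipser equation \eqref{eq:FI} combined with \eqref{eq:derU}. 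One aside is wrong, however: that identity is \emph{not} equivalent to $\Box\bigl((-\kappa_{CD}\kappa^{CD})^{-1/2}\Upsilon\bigr)=0$; expanding the latter produces an extra cross term $2U^{AA'}\nabla_{AA'}\Upsilon$ and the opposite sign on the zeroth-order term, and in fact the rescaled middle component satisfies a wave equation with a nonzero potential (proportional to $\Psi_{ABCD}\kappa^{AB}\kappa^{CD}/\kappa_{EF}\kappa^{EF}$), not the free wave equation. Since your primary statement of the reduction is the correct one and your plan for verifying it (commuting derivatives via \eqref{eq:DivTwistCurlCurlDagger} and using the integrability condition $\Psi_{(ABC}{}^{F}\kappa_{D)F}=0$) is exactly the paper's, this slip does not affect the argument provided you prove the identity as first stated rather than its purported reformulation.
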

\begin{remark}
Observe that we have not assumed anything about the matter or the behaviour of $\xi_{AA'}$.
\end{remark}
\begin{proof}
Expanding the definition of $U_{AA'}$ yield
\begin{align}
U_{AA'}={}&\frac{2 \kappa_{AB} \xi^{B}{}_{A'}}{3 (\kappa_{CD} \kappa^{CD})}.
\end{align}
We can write the Maxwell field in terms of $\Theta_{AB}$ and $\Upsilon$ through the relation
\begin{align}
\kappa_{CD} \kappa^{CD} \phi_{AB}={}&- \Theta_{A}{}^{C} \kappa_{BC}
 + \kappa_{AB} \Upsilon.\label{eq:phiToUpsilon}
\end{align}
Differentiating the definition of $\Upsilon$, using the Killing spinor equation and the Maxwell equation gives
\begin{align}
\kappa^{BC} (\sTwist_{2,0} \phi)_{ABCA'}={}&\tfrac{2}{3} \xi^{B}{}_{A'} \phi_{AB}
 + (\sTwist_{0,0} \Upsilon)_{AA'}.\label{eq:kappaTwistphiToUpsilon}
\end{align}
Expanding the definition of $\beta_{AA'}$, using \eqref{eq:kappaTwistphiToUpsilon} and \eqref{eq:phiToUpsilon} we get
\begin{align}
\beta_{AA'}={}&U^{B}{}_{A'} \Theta_{AB}
 -  \tfrac{4}{3} \xi^{B}{}_{A'} \phi_{AB}
 + \kappa^{BC} (\sTwist_{2,0} \phi)_{ABCA'}\nonumber\\
={}&U^{B}{}_{A'} \Theta_{AB}
 -  \tfrac{2}{3} \xi^{B}{}_{A'} \phi_{AB}
 + (\sTwist_{0,0} \Upsilon)_{AA'}\nonumber\\
={}&- U_{AA'} \Upsilon
 + (\sTwist_{0,0} \Upsilon)_{AA'}.
\end{align}
Using the commutators \eqref{eq:DivTwistCurlCurlDagger}, \eqref{eq:CurlTwist} and the Killing spinor equation, we get
\begin{align}
(\sCurl_{1,1} \xi)_{AB}={}&(\sCurl_{1,1} \sCurlDagger_{2,0} \kappa)_{AB}
=-6 \Lambda \kappa_{AB}
 + \tfrac{3}{2} \Psi_{ABCD} \kappa^{CD},\label{eq:Curlxi}\\
0={}&\tfrac{1}{2} (\sCurl_{3,1} \sTwist_{2,0} \kappa)_{ABCD}
=\Psi_{(ABC}{}^{F}\kappa_{D)F}.\label{eq:IntCond}
\end{align}
Differentiating \eqref{eq:kappaTwistphiToUpsilon}, using \eqref{eq:phiToUpsilon}, the commutator \eqref{eq:DivTwistCurlCurlDagger}, the Maxwell equation, \eqref{eq:IntCond} and \eqref{eq:Curlxi}, we get a version of the Fackerell-Ipser equation
\begin{align}
(\sDiv_{1,1} \sTwist_{0,0} \Upsilon)={}&-4 \Lambda \Upsilon
 + \Psi_{ABCD} \kappa^{AB} \phi^{CD}
=\frac{2 \kappa^{AB} (\sCurl_{1,1} \xi)_{AB}}{3 (\kappa_{CD} \kappa^{CD})}\Upsilon.\label{eq:FI}
\end{align}
Direct calculations using the Killing spinor equation yield
\begin{align}
(\sDiv_{1,1} U)={}&- U_{AA'} U^{AA'}
 + \frac{2 \kappa^{AB} (\sCurl_{1,1} \xi)_{AB}}{3 (\kappa_{CD} \kappa^{CD})},&
(\sCurl_{1,1} U)_{AB}={}&0,&
(\sCurlDagger_{1,1} U)_{A'B'}={}&0.\label{eq:derU}
\end{align}
The equation \eqref{eq:Divbeta} follows from \eqref{eq:betaToUpsilon}, \eqref{eq:derU} and \eqref{eq:FI}.
The equations \eqref{eq:Curlbeta} and \eqref{eq:CurlDgbeta} follow from \eqref{eq:betaToUpsilon}, \eqref{eq:derU} and the commutators \eqref{eq:CurlTwist0} and \eqref{eq:CurlDaggerTwist0}.
\end{proof}
\begin{lemma}The components of $\phi_{AB}$, $\Theta_{AB}$ and $\beta_{AA'}$ in a principal dyad are related by
\begin{align}
\Theta_{0} ={}& -2\kappa_{1}\phi_{0}, &
\Theta_{1}={}&0, &
\Theta_{2}={}&2\kappa_{1}\phi_{2} ,\\
\beta_{00'}={}&\edt' \Theta_{0}, &
\beta_{01'}={}&\tho' \Theta_{0}, &
\beta_{10'}={}&- \tho \Theta_{2}, &
\beta_{11'}={}&- \edt \Theta_{2}.
\end{align}
\end{lemma}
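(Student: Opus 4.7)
The plan is to verify the seven component identities by direct dyad calculation, treating the three identities for $\Theta_{AB}$ and the four identities for $\beta_{AA'}$ separately.

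For the three $\Theta$-identities, the starting point is that the principal dyad is aligned with both principal null directions of Schwarzschild, so the Killing spinor $\kappa_{AB}$ has vanishing extreme components and is a multiple of $o_{(A}\iota_{B)}$ with coefficient determined by $\kappa_1$. Substituting the standard dyad expansion of $\phi_{AB}$ into $\Theta_{AB} = -2\kappa_{(A}{}^{C}\phi_{B)C}$ and symmetrising in $A,B$, the contribution proportional to $\phi_1$ cancels identically, because the combination $o_A\iota_B + o_B\iota_A$ equals $\iota_A o_B + \iota_B o_A$. What remains is a pure $\iota\iota$-term proportional to $\kappa_1\phi_0$ and a pure $oo$-term proportional to $\kappa_1\phi_2$, yielding at once $\Theta_0 = -2\kappa_1\phi_0$, $\Theta_1 = 0$, $\Theta_2 = 2\kappa_1\phi_2$.

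For the four $\beta$-identities, I would work from the defining formula $\beta_{AA'} = (\sCurlDagger_{2,0}\Theta)_{AA'} + U^{B}{}_{A'}\Theta_{AB}$ together with the decomposition $\Theta_{AB} = \Theta_0\,\iota_A\iota_B + \Theta_2\,o_A o_B$ just obtained. Expanding $\nabla^{B}{}_{A'}\Theta_{AB}$ by Leibniz produces, on the one hand, directional derivatives of the scalars $\Theta_0$ and $\Theta_2$ which assemble into the coordinate pieces of $\tho$, $\tho'$, $\edt$, $\edt'$ once the free indices are contracted with the appropriate dyad elements, and on the other hand terms involving $\nabla o_A$ and $\nabla\iota_A$, which expand as linear combinations of the dyad spinors with spin-coefficient coefficients. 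In the Schwarzschild principal dyad the only nonvanishing spin coefficients are $\rho$, $\rho'$ and their conjugates, and the spin-coefficient terms split into a part that completes each coordinate derivative into the properly weighted GHP derivative, plus a residual. The correction $U^{B}{}_{A'}\Theta_{AB}$ is designed to cancel that residual exactly: from $U_{AA'} = -\tfrac{1}{2}\nabla_{AA'}\log(-\kappa_{CD}\kappa^{CD})$ and the identity $-\kappa_{CD}\kappa^{CD} = 2\kappa_1^2$ one obtains $U_{AA'} = -\nabla_{AA'}\log\kappa_1$, so that the dyad components $U_{00'}$, $U_{11'}$ are the logarithmic derivatives of $\kappa_1$ along $l$ and $n$, while $U_{01'} = U_{10'} = 0$ because $\kappa_1$ depends only on $r$; the Killing spinor equation for $\kappa_{AB}$ then forces these logarithmic derivatives to match the $\rho$, $\rho'$ contributions that need to be eliminated.

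The main obstacle is the spin-coefficient bookkeeping in the second step: keeping track of signs and verifying that the $U$-correction kills exactly the non-GHP terms coming out of the Leibniz expansion. This is organised most cleanly by first listing the dyad components of $U_{AA'}$ and of $\nabla_{AA'} o_B$, $\nabla_{AA'}\iota_B$ in the Schwarzschild principal dyad, and then showing componentwise that each $\beta_{ii'}$ collapses to the stated single GHP derivative of $\Theta_0$ or $\Theta_2$; the signs in $\beta_{10'} = -\tho\Theta_2$ and $\beta_{11'} = -\edt\Theta_2$ arise from the $\epsilon^{BC}$ contraction that relates the raised dyad index in $\nabla^{B}{}_{A'}$ to the covector basis.
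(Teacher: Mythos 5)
Your proposal is correct and matches the paper's (implicit) approach: the lemma is stated there without proof as a direct dyad/GHP computation, and your outline --- the symmetric-product cancellation of the $\phi_1$ contribution for the $\Theta$-identities, followed by a Leibniz/spin-coefficient expansion of $(\sCurlDagger_{2,0}\Theta)_{AA'}$ with the $U^{B}{}_{A'}\Theta_{AB}$ term absorbing the leftover $\rho$, $\rho'$ contributions --- is exactly that computation. The only refinement worth noting is that the $U$-correction in fact contributes only to $\beta_{01'}$ and $\beta_{10'}$, since its contributions to $\beta_{00'}$ and $\beta_{11'}$ are proportional to $\Theta_1=0$ (and the residual spin-coefficient terms in those two components vanish because $\kappa=\sigma=\tau=\tau'=0$ in the principal tetrad); your componentwise bookkeeping would reveal this.
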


The superenergy tensors for $\beta_{AA'}$ and $\Theta_{AB}$ are given by
\begin{subequations}
\begin{align}
\EMTensorH_{ABA'B'}={}&\tfrac{1}{2} \beta_{AB'} \overline{\beta}_{A'B}
 + \tfrac{1}{2} \beta_{BA'} \overline{\beta}_{B'A},  \label{eq:Hdef} \\
\mathbf{W}_{ABA'B'} ={}& \Theta_{AB} \overline\Theta_{A'B'} .
\end{align}
\end{subequations}
The notion of superenergy tensor extends to spinors of arbitrary valence, see \cite{senovilla:2000CQGra..17.2799S}.

We close this section by giving the correspondence between the spinor fields $\kappa_{AB}, \Theta_{AB}$ and the tensor fields $Y_{ab}, Z_{ab}$. We have
\begin{align*}
Y_{ab}={}&\tfrac{3}{2}i (\bar\epsilon_{A'B'} \kappa_{AB}
 -  \epsilon_{AB} \bar{\kappa}_{A'B'}) ,\\
Z_{ab}={}&\bar\epsilon_{A'B'} \Theta_{AB}
 + \epsilon_{AB} \overline{\Theta}_{A'B'} .
\end{align*}
The normalization of $Y_{ab}$ is chosen for convenience.
For the Schwarzschild spacetime with a principal dyad $(o_A,\iota_A)$ we have
\begin{align}\label{eq:kappaSchw}
\kappa_{AB}={}&\tfrac{2}{3} r o_{(A} \iota_{B)}.
\end{align}
In the Schwarzschild case, the above definitions with $\kappa_{AB}$ given by \eqref{eq:kappaSchw} yield the fields $Y_{ab}$, $\xi^a$, $U_a$, $Z_{ab}$, $\beta_a$,  $\EMTensorH_{ab}$, $\mathbf{W}_{ab}$ as in the introduction.
Finally,
the Morawetz current $\MorawetzCurrJ_{AA'}$ given in tensor form by \eqref{eq:MorawetzMomentum}
can be written in spinor form as
\begin{align}
\MorawetzCurrJ_{AA'}={}&\EMTensorH_{ABA'B'} \vecMprimary^{BB'}
 -  \tfrac{1}{2} \scalMprimary \bar{\beta}_{A'}{}^{B} \Theta_{AB}
 -  \tfrac{1}{2} \scalMprimary \beta_{A}{}^{B'} \overline{\Theta}_{A'B'}
  + \tfrac{1}{2} \Theta_{A}{}^{B} \overline{\Theta}_{A'}{}^{B'} (\sTwist_{0,0} \scalMprimary )_{BB'}.\label{eq:MorawetzCurrJDef}
\end{align}

\subsection{Schwarzschild spacetime}
\begin{lemma}\label{lem:divH}
For the Schwarzschild spacetime we have
\begin{subequations}\label{eq:divHeqs}
\begin{align}
\nabla^{BB'}\EMTensorH_{ABA'B'}={}&- U_{AA'} \beta^{BB'} \bar{\beta}_{B'B},\label{eq:divH}\\
\xi^{AA'} \nabla^{BB'}\EMTensorH_{ABA'B'}={}&0.\label{eq:xidivH}
\end{align}
\end{subequations}
In particular, $\xi^{BB'}\EMTensorH_{ABA'B'}$ is a positive conserved current.
\end{lemma}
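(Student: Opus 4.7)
The plan is to pass to tensor notation, where the divergence structure is most transparent. Starting from $\EMTensorH_{ab}=\beta_{(a}\bar\beta_{b)}-\tfrac12 g_{ab}\beta^c\bar\beta_c$, the Leibniz rule shows that $\nabla^a\EMTensorH_{ab}$ reduces to a sum of four terms: two involving the divergences $\nabla^a\beta_a$ and $\nabla^a\bar\beta_a$ paired with $\bar\beta_b$ and $\beta_b$ respectively, and two involving the exterior derivatives $d\beta$ and $d\bar\beta$ contracted with $\bar\beta^a$ and $\beta^a$. Switching back to spinor form at the end recovers \eqref{eq:divH}.

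Next, I would invoke Lemma~\ref{lem:betaprop} to eliminate all these derivatives. Equation~\eqref{eq:Divbeta} gives $\nabla^a\beta_a=-U^a\beta_a$ directly. Equations \eqref{eq:Curlbeta} and \eqref{eq:CurlDgbeta} encode the self-dual and anti-self-dual parts of $d\beta$ and, taken together, translate into the clean tensor identity $(d\beta)_{ab}=U_a\beta_b-U_b\beta_a$. The same identity can be read off transparently from \eqref{eq:betaToUpsilon}: writing $\beta=-U\Upsilon+d\Upsilon$ and differentiating, the $d^2\Upsilon$ contribution vanishes and the $dU\cdot\Upsilon$ contribution vanishes by \eqref{eq:derU}, leaving $d\beta=U\wedge\beta$. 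Reality of $U$ produces the analogous identities for $\bar\beta$ by complex conjugation.

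Substituting these into the Leibniz expression, all four terms become $U$ times a bilinear in $\beta$ and $\bar\beta$: explicitly, $\bar\beta^a(d\beta)_{ab}=(U\cdot\bar\beta)\beta_b-U_b(\beta\cdot\bar\beta)$ and $\beta^a(d\bar\beta)_{ab}=(U\cdot\beta)\bar\beta_b-U_b(\beta\cdot\bar\beta)$, which contribute with factors of $\tfrac12$. Crucially, the terms proportional to $(U\cdot\beta)\bar\beta_b$ cancel in pairs between the divergence and curl pieces, as do those proportional to $(U\cdot\bar\beta)\beta_b$, and what survives is exactly $-U_b(\beta^c\bar\beta_c)$. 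This is \eqref{eq:divH}. I expect the point requiring most care to be sign-tracking in the curl contributions; once $d\beta=U\wedge\beta$ is firmly established, the cancellation itself is a short arithmetic check.

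For \eqref{eq:xidivH}, contract \eqref{eq:divH} with $\xi^{AA'}$. Since $U_a=-r^{-1}\nabla_a r$ on Schwarzschild and the Killing field $\xi^a=(\partial_t)^a$ annihilates $r$, we have $\xi^aU_a=0$, so the right-hand side vanishes. The concluding assertion that $\EMTensorH_{ab}\xi^b$ is a positive conserved current then follows from \eqref{eq:xidivH} combined with $\nabla^{(a}\xi^{b)}=0$ and the symmetry of $\EMTensorH_{ab}$, which together yield $\nabla^a(\EMTensorH_{ab}\xi^b)=0$; positivity is immediate from the dominant energy condition recorded in the introduction.
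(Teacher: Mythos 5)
Your proposal is correct and follows essentially the same route as the paper: the paper computes $\nabla^{BB'}\EMTensorH_{ABA'B'}$ by irreducibly decomposing the derivatives of $\beta$ and feeding in Lemma~\ref{lem:betaprop} together with the reality of $U_{AA'}$, which is precisely your tensor-language computation where only $\nabla^a\beta_a=-U^a\beta_a$ and $d\beta=U\wedge\beta$ survive and the cross terms cancel to leave $-U_b\,\beta^c\bar\beta_c$. The contraction with $\xi^a$ via $\xi^aU_a=0$ and the conservation of $\EMTensorH_{ab}\xi^b$ from the Killing equation are also exactly the paper's argument.
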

\begin{proof}
For the Schwarzschild spacetime with a principal dyad $(o_A,\iota_A)$ the Killing spinor $\kappa_{AB}$ is given by \eqref{eq:kappaSchw}.
This gives immediately
\begin{subequations}
\begin{align}
\xi_{AA'}={}&(\partial_t)^{AA'}=\frac{(r - 2 M)^{1/2} \OrthFrameT^{AA'}}{r^{1/2}},\\
U_{AA'}={}&- r^{-1}\nabla_{AA'}r=\frac{(r - 2 M)^{1/2} \OrthFrameZ_{AA'}}{r^{3/2}}.
\end{align}
\end{subequations}

Computing the divergence of \eqref{eq:Hdef}, doing an irreducible decomposition of the derivatives, using Lemma~\ref{lem:betaprop} and using the reality of $U_{AA'}$ gives \eqref{eq:divH}. Equation \eqref{eq:xidivH}, then follows from $\xi^{AA'}U_{A'A}=0$.
\end{proof}
\begin{remark}
For the Kerr spacetime with non-vanishing angular momentum, the 1-form $U_{AA'}$ fails to be real and the current $\EMTensorH_{ab} \xi^b$ is not conserved.
\end{remark}

\section{Positive energy}\label{sec:posenergy}
Before we do any integrated decay estimates, we will verify that the energy \eqref{eq:ExiAqdef} will be positive and uniformly equivalent to the energy \eqref{eq:Exidef}.
\begin{theorem}\label{thm:PositiveEnergy}
Let $\vecMprimary^{AA'}$ and $\scalMprimary$ be given by \eqref{eq:Adef} and \eqref{eq:qdef}.
For any constant $|c_1|\leq 10/9$ and any spherically symmetric slice $\Sigma$ with future pointing timelike normal $N^{AA'}$ such that $N^{AA'}N_{AA'}=1$ we have a positive energy
\begin{align}
\int_{\Sigma} N^{AA'} (\EMTensorH_ {ABA'B'} \xi^{BB'} + c_ 1\MorawetzCurrJ_{AA'})  \mbox{d}\mu_{\Sigma_i} \geq{}&0.
\end{align}
\end{theorem}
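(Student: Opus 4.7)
The plan is to reduce $\int_\Sigma N^{AA'}(\EMTensorH_{ABA'B'}\xi^{BB'} + c_1 \MorawetzCurrJ_{AA'})\,\mathrm{d}\mu_{\Sigma}$ to a manifestly non-negative expression by combining the dominant energy condition with a sphere-by-sphere application of the Hardy estimate of Lemma~\ref{lem:hardy1}. Because $\Sigma$ is spherically symmetric, its future-directed unit normal lies in the $(\OrthFrameT,\OrthFrameZ)$-plane at every point, so $N^a = \alpha\OrthFrameT^a + \gamma\OrthFrameZ^a$ with $\alpha\geq 1$ and $\alpha^2-\gamma^2=1$, and the integral over $\Sigma$ factors through integration over each sphere $S_r$ followed by integration in $r$. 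This is what will let us couple the Hardy estimate to the pointwise (in $r$) calculation.

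Using \eqref{eq:MorawetzCurrJDef}, the pointwise integrand splits as
\begin{align*}
N^{AA'}\EMTensorH_{ABA'B'}(\xi+c_1\vecMprimary)^{BB'}
&-\tfrac{c_1\scalMprimary}{2}N^{AA'}\!\bigl(\bar\beta_{A'}{}^{B}\Theta_{AB}+\beta_{A}{}^{B'}\bar\Theta_{A'B'}\bigr)\\
&+\tfrac{c_1}{2}N^{AA'}\Theta_{A}{}^{B}\bar\Theta_{A'}{}^{B'}(\sTwist_{0,0}\scalMprimary)_{BB'}.
\end{align*}
A direct computation gives $(\xi+c_1\vecMprimary)^a(\xi+c_1\vecMprimary)_a=\tfrac{r-2M}{r}\bigl(1-\tfrac{c_1^2(r-3M)^2}{4r^2}\bigr)$, which is strictly positive on the exterior for all $|c_1|<2$; together with $N$ future-directed timelike, this makes the first line non-negative by the dominant energy condition for $\EMTensorH_{ab}$, and when expanded in the principal dyad it is a positive-definite quadratic form in the components $\beta_{ii'}$ with explicit $r$-dependent weights.

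The remaining two terms will be handled by Cauchy--Schwarz followed by Hardy. The bilinear cross term admits the bound $|c_1\scalMprimary\,\bar\beta\,\Theta|\leq \tfrac{\varepsilon}{2}|\beta|^2+\tfrac{c_1^2\scalMprimary^2}{2\varepsilon}|\Theta|^2$ for a free $\varepsilon>0$, while the $(\sTwist_{0,0}\scalMprimary)$-term is directly majorised by a weighted multiple of $|\Theta_{0}|^2+|\Theta_{2}|^2$ (recall $\Theta_{1}=0$). Using the dyadic identifications $\beta_{00'}=\edt'\Theta_0$ and $\beta_{11'}=-\edt\Theta_2$, the angular Hardy estimates \eqref{eq:HardyTheta0}--\eqref{eq:HardyTheta2}, applied sphere by sphere, convert $\int_{S_r}|\Theta_0|^2$ and $\int_{S_r}|\Theta_2|^2$ into $r^2\int_{S_r}|\beta_{00'}|^2$ and $r^2\int_{S_r}|\beta_{11'}|^2$. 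After this reduction all three contributions are expressed in the common language of the quadratic form in $\beta_{ii'}$ that already appears in the DEC piece.

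The main obstacle is the resulting polynomial bookkeeping: after fixing $\varepsilon$ optimally and collecting every $r$- and $M$-dependent weight, one must verify that the net quadratic form in the four components $\beta_{ii'}$ is pointwise non-negative in $r$ on $(2M,\infty)$. This is where the sharp threshold $|c_1|\leq 10/9$ will emerge, as the worst-case radial inequality in $c_1$ and $r/M$ balancing the DEC surplus against the Cauchy--Schwarz-plus-Hardy deficit. The steps above are geometric and essentially mechanical; the content of the theorem lives in this final radial inequality, which I expect to verify by computing the minimum of the relevant rational function of $r/M$ explicitly and observing that it just fails at $|c_1|=10/9$.
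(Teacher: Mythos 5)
Your plan reproduces the paper's own proof essentially step for step: decompose the spherically symmetric unit normal $N^a$ in the $(\OrthFrameT,\OrthFrameZ)$-plane, use the dominant energy condition for $\EMTensorH_{ab}$ together with a weighted Cauchy--Schwarz on the $\scalMprimary\,\beta\,\Theta$ cross terms, convert the undifferentiated $\Theta$-terms into $\EMTensorH$-terms by the sphere-by-sphere angular Hardy estimate of Lemma~\ref{lem:hardy1}, and close with a radial inequality that in the paper reduces to the timelikeness of $(1-\tfrac{2}{5}|c_1|)\xi^a + c_1\vecMprimary^a$, which is exactly where $|c_1|\le 10/9$ emerges. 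The only caveat is that the inequality does hold (non-strictly) at $|c_1|=10/9$ rather than ``just failing'' there, and the pointwise Cauchy--Schwarz must be carried out on dyad components (or via the superenergy of a shifted spinor, as the paper does), since $\beta^{AA'}\bar\beta_{A'A}$ is not a positive-definite norm.
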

\begin{proof}
We denote the superenergy tensor of a vector field by
\begin{align}
\SuperEnOne[\alpha_{AA'}]_{ABA'B'}={}&\tfrac{1}{2} \alpha_{AB'} \bar{\alpha}_{A'B}
 + \tfrac{1}{2} \alpha_{BA'} \bar{\alpha}_{B'A}.
\end{align}
This tensor satisfies the dominant energy condition.

Now, as the slice is spherically symmetric, the future pointing timelike vector field $N^{AA'}$ is spanned by $\OrthFrameT^{AA'}$ and $\OrthFrameZ^{AA'}$ with coefficients depending only on $r$. We can in general write it as
\begin{align}
N^{AA'} = (w(r) + \tfrac{1}{4} w(r)^{-1}) \OrthFrameT^{AA'} + (w(r) - \tfrac{1}{4} w(r)^{-1}) \OrthFrameZ^{AA'},
\end{align}
for some radial function $w(r)>0$.
We can then verify the identity
\begin{align}
\pm N^{AA'} (\bar{\beta}_{A'}{}^{B} \Theta_{AB} + \beta_{A}{}^{B'} \overline{\Theta}_{A'B'})={}&2 \varepsilon^{-1} \EMTensorH_{ABA'B'} N^{AA'} \OrthFrameT^{BB'}
 + 2 \varepsilon N^{AA'} \OrthFrameT^{BB'} \Theta_{AB} \overline{\Theta}_{A'B'}\nonumber\\
& - 2 \varepsilon^{-1} N^{AA'} \OrthFrameT^{BB'} \SuperEnOne[\beta_{AA'} \mp  b_1^{-1} \varepsilon N^{C}{}_{A'} \Theta_{AC}]_{ABA'B'}\nonumber\\
& -  b_1^{-1} \varepsilon \OrthFrameT^{AA'} \OrthFrameT^{BB'} \Theta_{AB} \overline{\Theta}_{A'B'},
\end{align}
where $b_1=N^{AA'} \OrthFrameT_{AA'} = \tfrac{1}{4} w(r)^{-1} + w(r)>0$.
The dominant energy condition then gives the Cauchy-Schwarz inequalities
\begin{align}
\pm N^{AA'} (\bar{\beta}_{A'}{}^{B} \Theta_{AB} + \beta_{A}{}^{B'} \overline{\Theta}_{A'B'})
\leq{}&2 \varepsilon^{-1} \EMTensorH_{ABA'B'} N^{AA'} \OrthFrameT^{BB'}
 + 2 \varepsilon N^{AA'} \OrthFrameT^{BB'} \Theta_{AB} \overline{\Theta}_{A'B'}.
\end{align}

Using this in the definition \eqref{eq:MorawetzCurrJDef} and the positivity of $q$, we get
\begin{align}
\pm N^{AA'} \MorawetzCurrJ_{AA'} \geq{}&
  \EMTensorH_{ABA'B'} N^{AA'} (\pm \vecMprimary^{BB'}
 -  \varepsilon^{-1} \scalMprimary \OrthFrameT^{BB'})\nonumber\\
& + N^{AA'} \Theta_{AB} \overline{\Theta}_{A'B'} \bigl(- \varepsilon \scalMprimary \OrthFrameT^{BB'}
 \pm \tfrac{1}{2} (\sTwist_{0,0} \scalMprimary )^{BB'}\bigr).
\end{align}

Expanding in the dyad gives
\begin{subequations}
\begin{align}
N^{AA'} \OrthFrameT^{BB'} \Theta_{AB} \bar{\Theta}_{A'B'}={}&w(r) |\Theta_{0}|^2
 + \frac{|\Theta_{2}|^2}{4 w(r)},\\
N^{AA'} \OrthFrameZ^{BB'} \Theta_{AB} \bar{\Theta}_{A'B'}={}&w(r) |\Theta_{0}|^2
 -  \frac{|\Theta_{2}|^2}{4 w(r)},\\
 N^{AA'} \OrthFrameT^{BB'} \EMTensorH_{ABA'B'}={}&w(r) |\edt' \Theta_{0}|^2
 + w(r) |\tho' \Theta_{0}|^2
 + \frac{|\tho \Theta_{2}|^2}{4 w(r)}
 + \frac{|\edt \Theta_{2}|^2}{4 w(r)}\nonumber\\
 \geq{}&w(r) |\edt' \Theta_{2}|^2
 + \frac{|\edt \Theta_{2}|^2}{4 w(r)}.
\end{align}
\end{subequations}
Hence, from \eqref{eq:HardyTheta0} and \eqref{eq:HardyTheta2} we get the Hardy estimates
\begin{subequations}
\begin{align}
\int_{S_r} N^{AA'} \OrthFrameT^{BB'} \Theta_{AB}\overline\Theta_{A'B'} d\mu_{S_r}
\leq{}&\frac{r^2}{2} \int_{S_r} N^{AA'} \OrthFrameT^{BB'} \EMTensorH_{ABA'B'} \mbox{d}\mu_{S_r},\\
\int_{S_r} |N^{AA'} \OrthFrameZ^{BB'} \Theta_{AB}\overline\Theta_{A'B'}| d\mu_{S_r}
\leq{}&\frac{r^2}{2} \int_{S_r} N^{AA'} \OrthFrameT^{BB'} \EMTensorH_{ABA'B'} \mbox{d}\mu_{S_r}.
\end{align}
\end{subequations}
Hence, we can make the estimates
\begin{align}
\int_{S_r} \pm N^{AA'} \MorawetzCurrJ_{AA'} \mbox{d}\mu_{S_r} \geq{}&
\int_{S_r} \EMTensorH_{ABA'B'} N^{AA'} (\pm\vecMprimary^{BB'} - B^{BB'})\mbox{d}\mu_{S_r},
\end{align}
where
\begin{align}
B^{AA'}={}&
   \varepsilon^{-1} \scalMprimary \OrthFrameT^{AA'}
 +  \tfrac{1}{2} \varepsilon \scalMprimary r^2 \OrthFrameT^{AA'}
  +  \tfrac{1}{4} |q'(r)| r^{3/2} (r - 2 M)^{1/2} \OrthFrameT^{AA'}.
\end{align}
Here we have used the ansatz $q=q(r)$, which yield
$(\sTwist_{0,0} \scalMprimary )_{AA'}=- (r - 2 M)^{1/2} r^{-1/2} q'(r) \OrthFrameZ_{AA'} $.
With our choice of $q(r)$ and
\begin{align}
\varepsilon={}&r^{- 3/2} (r - 2 M)^{1/2},
\end{align}
we get
\begin{align}
B^{AA'}={}& \tfrac{9}{8} M^2 r^{-4} \bigl(6 M^2
 - 13 M r
 + 6 r^2
 + |r - 3 M| (3 r - 5 M)\bigr) \xi^{AA'}.
\end{align}
In the region $r\geq 2M$, we have
\begin{align}
\tfrac{9}{8} M^2 r^{-4} \bigl(6 M^2
 - 13 M r
 + 6 r^2
 + |r - 3 M| (3 r - 5 M)\bigr) \leq \tfrac{2}{5}.
\end{align}
Hence
\begin{align}
\int_{S_r} \pm N^{AA'} \MorawetzCurrJ_{AA'} \mbox{d}\mu_{S_r} \geq{}&
\int_{S_r} \EMTensorH_{ABA'B'} N^{AA'} (\pm\vecMprimary^{BB'} - \tfrac{2}{5} \xi^{BB'})\mbox{d}\mu_{S_r}.
\end{align}
This gives
\begin{align}
\int_{S_r}N^{AA'} (\EMTensorH_ {ABA'B'} \xi^{BB'} + c_1 \MorawetzCurrJ_{AA'})  \mbox{d}\mu_{S_r} \geq{}&
\int_{S_r} \EMTensorH_{ABA'B'} N^{AA'} ( (1-\tfrac{2}{5}|c_1|) \xi^{BB'} + c_1 \vecMprimary^{BB'})\mbox{d}\mu_{S_r}.\label{ineq:Hc1J1}
\end{align}
With our value of $f(r)$, the vector field $ \xi^{AA'} + c_2 \vecMprimary^{AA'}$ is future pointing and timelike if $|c_2|\leq 2$. Hence, the right hand side of \eqref{ineq:Hc1J1} is non-negative if $|c_1|\leq 10/9$.
\end{proof}
\begin{corollary}\label{cor:uniformequivalence}
For any spherically symmetric slice $\Sigma$ with future pointing timelike normal $N^{AA'}$ such that $N^{AA'}N_{AA'}=1$ the energies $E_\xi(\Sigma)$ and
$E_{\xi +\vecMprimary, \scalMprimary}(\Sigma)$ are uniformly equivalent,
\begin{align}
\tfrac{1}{10}E_\xi(\Sigma)
\leq{}&
E_{\xi +\vecMprimary, \scalMprimary}(\Sigma)
\leq
\tfrac{19}{10}E_\xi(\Sigma).
\end{align}
\end{corollary}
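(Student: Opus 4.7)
The plan is to deduce the corollary directly from Theorem~\ref{thm:PositiveEnergy} by applying it with two sign choices of $c_1$. First I would expand the definition \eqref{eq:ExiAqdef} as
\begin{align*}
E_{\xi +\vecMprimary, \scalMprimary}(\Sigma) = E_\xi(\Sigma) + \int_\Sigma \MorawetzCurrJ_{AA'} N^{AA'} \, d\mu_\Sigma,
\end{align*}
so the claim is equivalent to the two-sided bound
\begin{align*}
-\tfrac{9}{10} E_\xi(\Sigma) \leq \int_\Sigma \MorawetzCurrJ_{AA'} N^{AA'} \, d\mu_\Sigma \leq \tfrac{9}{10} E_\xi(\Sigma).
\end{align*}

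Next I would apply Theorem~\ref{thm:PositiveEnergy} at the boundary of its allowed range, namely with $c_1 = +10/9$ and with $c_1 = -10/9$. The first choice gives
\begin{align*}
E_\xi(\Sigma) + \tfrac{10}{9}\int_\Sigma \MorawetzCurrJ_{AA'} N^{AA'} \, d\mu_\Sigma \geq 0,
\end{align*}
which rearranges to the lower bound $\int_\Sigma \MorawetzCurrJ_{AA'} N^{AA'} \, d\mu_\Sigma \geq -\tfrac{9}{10} E_\xi(\Sigma)$. The second choice gives the matching upper bound. Adding $E_\xi(\Sigma)$ to both inequalities yields the claimed sandwich between $\tfrac{1}{10} E_\xi(\Sigma)$ and $\tfrac{19}{10} E_\xi(\Sigma)$.

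There is essentially no obstacle here, since all the work has already been done in the proof of Theorem~\ref{thm:PositiveEnergy}: the constant $10/9$ was precisely chosen so that the combined vector field $(1 - \tfrac{2}{5}|c_1|)\xi^{AA'} + c_1 \vecMprimary^{AA'}$ remains future-directed timelike (using $|\vecMprimary| \leq \tfrac{1}{2}\xi$ in the appropriate sense, i.e. the threshold $|c_2| \leq 2$ derived in that proof). The only care required is to verify that the sign conventions in the statement of Theorem~\ref{thm:PositiveEnergy} match the definition \eqref{eq:ExiAqdef}, which they do directly, so the corollary follows by a one-line computation after the two applications of the theorem.
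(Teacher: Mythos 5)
Your proposal is correct and is exactly the intended derivation: the paper states the corollary without an explicit proof, but it follows from Theorem~\ref{thm:PositiveEnergy} precisely by taking $c_1 = \pm 10/9$ and using $E_{\xi+\vecMprimary,\scalMprimary}(\Sigma) = E_\xi(\Sigma) + \int_\Sigma \MorawetzCurrJ_{AA'}N^{AA'}\,d\mu_\Sigma$, as you do. The arithmetic checks out and yields the constants $\tfrac{1}{10}$ and $\tfrac{19}{10}$.
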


\section{Integrated decay estimate}\label{sec:integrateddecay}
In this section, we will prove an integrated decay estimate.
\begin{lemma} \label{lem:Hardy-application}
On any sphere with constant $r$, we have
\begin{align}
\int_{S_r} \mathbf{W}_{\OrthFrameT\OrthFrameT}	d\mu_{S_r}
\leq{}&\frac{r^2}{2} \int_{S_r} |\beta_{\OrthFrameT}|^2 + |\beta_{\OrthFrameZ}|^2 \mbox{d}\mu_{S_r}.
\end{align}
\end{lemma}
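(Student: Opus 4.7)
The plan is to reduce both sides to spin-weighted scalar expressions on the sphere and then apply Lemma~\ref{lem:hardy1} to each extreme component of $\Theta_{AB}$ separately. The payoff of having constructed $\beta_{AA'}$ so that on a principal dyad its mixed-weight components align with $\edt'\Theta_0$ and $\edt\Theta_2$ is that the Hardy estimates for $\Theta_0$ and $\Theta_2$ translate verbatim into estimates relating $\mathbf{W}_{\OrthFrameT\OrthFrameT}$ to the $\OrthFrameT,\OrthFrameZ$-components of $\beta$.

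First, I would write $\OrthFrameT^{AA'}=\tfrac{1}{\sqrt{2}}(o^{A}\bar o^{A'}+\iota^{A}\bar\iota^{A'})$ and $\OrthFrameZ^{AA'}=\tfrac{1}{\sqrt{2}}(o^{A}\bar o^{A'}-\iota^{A}\bar\iota^{A'})$, and expand everything in the principal dyad. Using $\Theta_{1}=0$ (so that $\Theta_{AB}o^{A}\iota^{B}=0$), the cross terms drop out, giving
\begin{align*}
\mathbf{W}_{\OrthFrameT\OrthFrameT}
={}&\Theta_{AB}\bar\Theta_{A'B'}\OrthFrameT^{AA'}\OrthFrameT^{BB'}
=\tfrac{1}{2}\bigl(|\Theta_{0}|^{2}+|\Theta_{2}|^{2}\bigr),\\
\beta_{\OrthFrameT}={}&\tfrac{1}{\sqrt{2}}(\beta_{00'}+\beta_{11'}),\qquad
\beta_{\OrthFrameZ}=\tfrac{1}{\sqrt{2}}(\beta_{00'}-\beta_{11'}),
\end{align*}
so that after expanding the squares the cross terms cancel and
\begin{align*}
|\beta_{\OrthFrameT}|^{2}+|\beta_{\OrthFrameZ}|^{2}=|\beta_{00'}|^{2}+|\beta_{11'}|^{2}.
\end{align*}

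Second, I would substitute the component identities of the preceding lemma, $\beta_{00'}=\edt'\Theta_{0}$ and $\beta_{11'}=-\edt\Theta_{2}$, to obtain
\begin{align*}
|\beta_{\OrthFrameT}|^{2}+|\beta_{\OrthFrameZ}|^{2}=|\edt'\Theta_{0}|^{2}+|\edt\Theta_{2}|^{2}.
\end{align*}

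Finally, applying Lemma~\ref{lem:hardy1} separately to the extreme components $\Theta_{0}$ (spin weight $+1$) and $\Theta_{2}$ (spin weight $-1$) of the smooth symmetric spinor $\Theta_{AB}$ yields
\begin{align*}
\int_{S_{r}}|\Theta_{0}|^{2}\,d\mu_{S_{r}}\leq r^{2}\!\int_{S_{r}}|\edt'\Theta_{0}|^{2}\,d\mu_{S_{r}},\qquad
\int_{S_{r}}|\Theta_{2}|^{2}\,d\mu_{S_{r}}\leq r^{2}\!\int_{S_{r}}|\edt\Theta_{2}|^{2}\,d\mu_{S_{r}}.
\end{align*}
Summing, dividing by $2$, and using the two identities from the previous paragraph gives the stated inequality. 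There is no real obstacle here beyond careful bookkeeping of the dyad versus orthonormal-frame components and the sign conventions for $\beta_{00'},\beta_{11'}$; everything else is an immediate consequence of the already-established spin-weighted Hardy estimate.
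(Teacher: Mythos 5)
Your proof is correct and follows essentially the same route as the paper: both reduce $\mathbf{W}_{\OrthFrameT\OrthFrameT}$ to $\tfrac12(|\Theta_0|^2+|\Theta_2|^2)$ and $|\beta_{\OrthFrameT}|^2+|\beta_{\OrthFrameZ}|^2$ to $|\edt'\Theta_0|^2+|\edt\Theta_2|^2$ via the dyad components, then apply the spin-weighted Hardy estimates of Lemma~\ref{lem:hardy1} componentwise. The only difference is that you spell out the intermediate identity $|\beta_{\OrthFrameT}|^2+|\beta_{\OrthFrameZ}|^2=|\beta_{00'}|^2+|\beta_{11'}|^2$, which the paper leaves implicit.
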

\begin{proof}
We have
\begin{subequations}
\begin{align}
|\beta_{\OrthFrameZ}|^2 + | \beta_{\OrthFrameZ}|^2={}&|\edt \Theta_{2}|^2
 + |\edt' \Theta_{0}|^2,\label{eq:TbetaZbetatocomp}\\
\mathbf{W}_{\OrthFrameT\OrthFrameT}=\OrthFrameT^{AA'} \OrthFrameT^{BB'} \Theta_{AB}\overline\Theta_{A'B'}={}&
\tfrac{1}{2} |\Theta_{0}|^2
 + \tfrac{1}{2} |\Theta_{2}|^2.\label{eq:E2tocomp}
\end{align}
\end{subequations}
The equations \eqref{eq:TbetaZbetatocomp}, \eqref{eq:E2tocomp} and Lemma~\ref{lem:hardy1} gives the desired inequality.
\end{proof}

To analyse the positivity of the bulk term, we will write it in terms of quadratic forms. We shall use the following notation.
\begin{definition}
For any vector field $\nu^{AA'}$ we define the quadratic form
\begin{align}
E_1[b_0, b_1, b_2, \nu_{AA'}]={}&\bigl(2 b_2 \OrthFrameT^{A}{}_{A'} \OrthFrameT^{B}{}_{B'} + (b_1 - 2 b_2) \OrthFrameZ^{A}{}_{A'} \OrthFrameZ^{B}{}_{B'}\bigr) \nu_{(A}{}^{(B'}\bar{\nu}^{A')}{}_{B)}\nonumber\\
&- (b_0 + \tfrac{1}{4} b_1) \nu_{AA'} \bar{\nu}^{A'A},
\end{align}
where $b_0$, $b_1$, $b_2$ are scalar fields.
\end{definition}
In particular,
\begin{align} \label{eq:E1beta}
E_1[- \tfrac{1}{2},2,\tfrac{1}{2},\beta_{AA'}]={}&|\beta_{\OrthFrameT}|^2
 + |\beta_{\OrthFrameZ}|^2.
\end{align}
\begin{lemma}\label{lemma:E1pos}
$E_1[b_0, b_1, b_2, \nu_{AA'}]\geq 0$ for all $\nu^{AA'}$ if and only if
\begin{subequations}
\begin{align}
0 \leq{}& b_1,\\
0 \leq{}& b_2,\\
\max(- b_2, - b_1 + b_2) \leq{}& b_0 \leq b_2.
\end{align}
\end{subequations}
\end{lemma}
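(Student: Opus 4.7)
The plan is to reduce the statement to elementary linear algebra by expanding $\nu^{AA'}$ in the orthonormal tetrad $(\OrthFrameT^a,\OrthFrameX^a,\OrthFrameY^a,\OrthFrameZ^a)$. Writing $\nu^a = \nu^T\OrthFrameT^a + \nu^X\OrthFrameX^a + \nu^Y\OrthFrameY^a + \nu^Z\OrthFrameZ^a$ with complex coefficients, the third term of $E_1$ is immediately $-(b_0+\tfrac14 b_1)(|\nu^T|^2 - |\nu^X|^2 - |\nu^Y|^2 - |\nu^Z|^2)$, since $\nu_{AA'}\bar\nu^{A'A}$ is just the Lorentzian inner product $g_{ab}\nu^a\bar\nu^b$.

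The key geometric input is the identification of $\nu_{(A}{}^{(B'}\bar{\nu}^{A')}{}_{B)}$ with the spinor form of the symmetric trace-free tensor $\nu_{(a}\bar\nu_{b)}-\tfrac14 g_{ab}(\nu_c\bar\nu^c)$, via the standard dictionary between symmetric spinors of valence $(2,2)$ and symmetric trace-free rank-two tensors. Contracting with $\OrthFrameT^a\OrthFrameT^b$ and $\OrthFrameZ^a\OrthFrameZ^b$ then produces expressions involving only $|\nu^T|^2,|\nu^X|^2,|\nu^Y|^2,|\nu^Z|^2$, since the tetrad legs are mutually orthogonal and so all mixed products $\OrthFrameT\cdot\OrthFrameX$, $\OrthFrameT\cdot\OrthFrameZ$, etc., vanish. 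Summing the three pieces, $E_1$ takes the manifestly diagonal form
\begin{equation*}
E_1[b_0,b_1,b_2,\nu] = (b_2-b_0)|\nu^T|^2 + (b_0+b_1-b_2)|\nu^Z|^2 + (b_0+b_2)(|\nu^X|^2+|\nu^Y|^2).
\end{equation*}
The overall normalization can be sanity-checked against \eqref{eq:E1beta}: for $(b_0,b_1,b_2)=(-\tfrac12,2,\tfrac12)$ the formula gives $|\nu^T|^2+|\nu^Z|^2$, as required.

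Once this diagonal form is in hand, positivity is immediate: since the four complex scalars $\nu^T,\nu^X,\nu^Y,\nu^Z$ may be specified independently, $E_1\geq 0$ for every $\nu^{AA'}$ is equivalent to the non-negativity of each of the three coefficients, which rearranges exactly to $\max(-b_2,-b_1+b_2)\leq b_0\leq b_2$. Feasibility of this bound in turn forces $-b_2\leq b_2$ and $-b_1+b_2\leq b_2$, i.e.\ $b_2\geq 0$ and $b_1\geq 0$, recovering the remaining inequalities in the statement. The main obstacle I anticipate is the first step, namely verifying carefully that the mixed-index symmetrization $\nu_{(A}{}^{(B'}\bar{\nu}^{A')}{}_{B)}$ really agrees with the trace-free symmetric tensor form with the normalization implicit in the definition of $E_1$; the calibration \eqref{eq:E1beta} provides a clean check of this, after which the rest of the lemma is a direct assembly of the three diagonal coefficients.
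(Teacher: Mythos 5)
Your proof is correct and follows essentially the same route as the paper: the paper's proof also expands $E_1$ in dyad components to find the four eigenvalues $b_0+b_1-b_2$, $-b_0+b_2$, $b_0+b_2$, $b_0+b_2$, which agree exactly with the diagonal coefficients in your tetrad expansion, and then requires each to be non-negative. Your version merely supplies the details the paper leaves implicit (the spinor--tensor dictionary for $\nu_{(A}{}^{(B'}\bar{\nu}^{A')}{}_{B)}$, the calibration against \eqref{eq:E1beta}, and the observation that the coefficient conditions already force $b_1,b_2\geq 0$).
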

\begin{proof}
The eigenvalues of the quadratic form $E_1[b_0, b_1, b_2, \nu_{AA'}]$ at a point can be calculated by expanding in $\nu_{AA'}$ in dyad components. One finds the eigenvalues
\begin{equation} \label{eq:E1eig}
b_0 + b_1 -  b_2, \quad
- b_0 + b_2, \quad
b_0 + b_2, \quad
b_0 + b_2 .
\end{equation}
Requiring each eigenvalue in \eqref{eq:E1eig} to be non-negative gives the result.
\end{proof}

\begin{theorem}\label{thm:MorawetzEst}
Let $\MorawetzCurrJ_{AA'}$ be given by \eqref{eq:MorawetzCurrJDef} with the choices \eqref{eq:Adef} and \eqref{eq:qdef}. Then we have the estimate
\begin{align}
\int_{\Omega}-(\sDiv_{1,1} \MorawetzCurrJ)d\mu_{\Omega}\geq{}&
\int_{\Omega}\frac{1}{8}|\beta_{AA'}|^2_{1,\text{deg}}+\frac{M}{100 r^4}|\Theta_{AB}|^2_2 d\mu_{\Omega},
\end{align}
for any spherically symmetric spacetime region $\Omega$ of the Schwarzschild spacetime.
\end{theorem}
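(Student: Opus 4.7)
The plan is to compute $-(\sDiv_{1,1}\MorawetzCurrJ)$ explicitly from the spinor form \eqref{eq:MorawetzCurrJDef} of the current, rewrite the resulting pointwise expression as a sum of a quadratic form in $\beta_{AA'}$ and a quadratic form in $\Theta_{AB}$, and then integrate over spheres to absorb any bad $\Theta$-contributions using the Hardy estimate of Lemma~\ref{lem:Hardy-application}.

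First I would differentiate term by term. The piece $\EMTensorH_{ABA'B'}\vecMprimary^{BB'}$ contributes a ``deformation'' term $\EMTensorH_{ABA'B'}\nabla^{AA'}\vecMprimary^{BB'}$ plus an error term $(\nabla^{AA'}\EMTensorH_{ABA'B'})\vecMprimary^{BB'}$ which, by \eqref{eq:divH} in Lemma~\ref{lem:divH}, is an explicit pointwise quadratic in $\beta$. The remaining $q$-dependent pieces of \eqref{eq:MorawetzCurrJDef} produce, upon differentiation and use of the Leibniz rule, (i) derivatives of $\Theta_{AB}$ reexpressed via the definition $\beta_{AA'}=(\sCurlDagger_{2,0}\Theta)_{AA'}+U^{B}{}_{A'}\Theta_{AB}$ and the Maxwell equation, giving mixed $\beta$--$\Theta$ products weighted by $\scalMprimary$, (ii) a pure $\Theta$-quadratic weighted by the second derivative $(\sDiv_{1,1}\sTwist_{0,0}\scalMprimary)$, which in turn is controlled by the Fackerell--Ipser relation \eqref{eq:FI} applied to the Killing structure rather than to a solution. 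I would systematically apply Lemma~\ref{lem:betaprop} to eliminate divergences and curls of $\beta$ in favor of $U$ and $\beta$ itself, and expand everything in the principal dyad using the component relations already established.

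Once the bulk expression is expanded in the frame $(\OrthFrameT,\OrthFrameX,\OrthFrameY,\OrthFrameZ)$ and the components $(\beta_{\OrthFrameT},\beta_{\OrthFrameX},\beta_{\OrthFrameY},\beta_{\OrthFrameZ},\Theta_{0},\Theta_{2})$, I would cast the $\beta$-part into the canonical form $E_1[b_0,b_1,b_2,\beta_{AA'}]$ of Lemma~\ref{lemma:E1pos} with coefficients $b_0,b_1,b_2$ determined by $\vecMprimary$, $\scalMprimary$ and $U$. The motivation for the specific choices \eqref{eq:Adef}--\eqref{eq:qdef} is precisely that the vanishing of $\vecMprimary^a$ at the photon sphere $r=3M$ forces the angular coefficients to degenerate exactly as $(r-3M)^2/r^3$, matching $\vert\beta\vert^2_{1,\text{deg}}$, while the positivity of $\scalMprimary$ (and of $\scalMprimary'$ in the far region) yields the $M(r-2M)/r^3$ weight on $|\beta_{\OrthFrameZ}|^2$ and the $M(r-3M)^2(r-2M)/r^5$ weight on $|\beta_{\OrthFrameT}|^2$. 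I would verify the pointwise inequalities of Lemma~\ref{lemma:E1pos} for this concrete $(b_0,b_1,b_2)$ after splitting off a factor $1/8$ so that the residual quadratic form has a definite surplus.

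The main obstacle is the undifferentiated $\Theta$-piece. After the $\beta$-extraction, one is left with a pointwise quadratic form in $\Theta_0,\Theta_2$ weighted by a combination of $\scalMprimary$ and $\scalMprimary'$; this form is \emph{not} pointwise non-negative (in particular it changes character across $r=3M$), so one cannot close the argument fiberwise. Here I would exploit the surplus left over on the $\beta$-side to form a combination of the type $\int_{S_r}(|\beta_{\OrthFrameT}|^2+|\beta_{\OrthFrameZ}|^2)\,d\mu_{S_r}$ with a strictly positive coefficient, and apply Lemma~\ref{lem:Hardy-application} on every $r$-sphere to trade this for $\int_{S_r}\mathbf{W}_{\OrthFrameT\OrthFrameT}\,d\mu_{S_r}/(r^2/2)=\int_{S_r}\vert Z\vert^2_2/(r^2/2)\cdot(r/(r-2M))\,d\mu_{S_r}$. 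Since $\Omega$ is spherically symmetric the integration can be reorganized as $\int dr\int_{S_r}$, so the sphere-wise Hardy step is legitimate. Balancing the surplus against the sign-indefinite $\Theta$-quadratic yields the stated coefficient $M/(100\,r^4)$ (the factor $100$ reflects the $1/2$ from Hardy, the fractional budget left over after pulling off $1/8$ for $\vert\beta\vert^2_{1,\text{deg}}$, and the supremum of the bad $\Theta$-coefficient divided by $r^{-4}$ on $r\geq 2M$). The single most delicate step is the last one: controlling the supremum of the sign-indefinite $\Theta$-coefficient uniformly in $r\geq 2M$ and checking that the chosen numerical weights leave room for both $1/8$ and $M/(100 r^4)$ simultaneously.
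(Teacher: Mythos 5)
Your proposal follows essentially the same route as the paper's proof: compute the divergence of the current using Lemma~\ref{lem:betaprop}, organize the $\beta$-part into the quadratic form $E_1$ of Lemma~\ref{lemma:E1pos} with coefficients determined by $f$, $q$ and an extracted surplus multiple of $|\beta_{\OrthFrameT}|^2+|\beta_{\OrthFrameZ}|^2$, and then absorb the sign-indefinite $\Theta$-quadratic sphere by sphere with the Hardy estimate of Lemma~\ref{lem:Hardy-application}. The only inaccuracies are cosmetic: the indefinite $\Theta$-coefficient involves $q'$ and $q''$ (not $q$ and $q'$) and in the paper's computation changes sign at $r=5M$ rather than at the photon sphere, but this does not affect the argument.
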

\begin{proof}
In order to motivate the choices \eqref{eq:Adef} and \eqref{eq:qdef}, we shall start by considering general radial $\vecMprimary^{AA'}$ and $\scalMprimary$.
From the form \eqref{eq:MorawetzCurrJDef}, the definition of $\beta_{AA'}$ and the properties from Lemma~\ref{lem:betaprop} we get
\begin{align}
- (\sDiv_{1,1} \MorawetzCurrJ)={}&-  \beta^{AA'} \bar{\beta}^{B'B} (\sTwist_{1,1} \vecMprimary)_{ABA'B'}
+\beta^{AA'} \bar{\beta}_{A'A} \bigl(
\tfrac{1}{4} (\sDiv_{1,1} \vecMprimary)
 + \vecMprimary^{BB'} U_{BB'}
- \scalMprimary\bigr)\nonumber\\
& + \Theta_{AB} \overline{\Theta}_{A'B'} \bigl(U^{AA'} (\sTwist_{0,0} \scalMprimary )^{BB'}
 -  \tfrac{1}{2} (\sTwist_{1,1} \sTwist_{0,0} \scalMprimary )^{ABA'B'}\bigr).\label{eq:divJJeq1}
\end{align}
With the ansatz
\begin{align}
\vecMprimary^{AA'}={}&f(r)(\partial_r)^{AA'}=\frac{f(r) r^{1/2} Z^{AA'}}{(r - 2 M)^{1/2}},&
\scalMprimary ={}&q(r),
\end{align}
we get
\begin{subequations}
\begin{align}
(\sDiv_{1,1} \vecMprimary)={}&\frac{2 f(r)}{r}
 + f'(r),\\
(\sTwist_{1,1} \vecMprimary)_{AB}{}^{A'B'}={}&\frac{f(r) \bigl(- (r - 3 M) \OrthFrameT_{(A}{}^{(A'}\OrthFrameT_{B)}{}^{B')} + (r - M) \OrthFrameZ_{(A}{}^{(A'}\OrthFrameZ_{B)}{}^{B')}\bigr)}{(r - 2 M) r}\nonumber\\
& -  \OrthFrameZ_{(A}{}^{(A'}\OrthFrameZ_{B)}{}^{B')} f'(r),\\
(\sTwist_{0,0} \scalMprimary )_{AA'}={}&- \frac{(r - 2 M)^{1/2} \OrthFrameZ_{AA'} q'(r)}{r^{1/2}},\\
(\sTwist_{1,1} \sTwist_{0,0} \scalMprimary )_{AB}{}^{A'B'}={}& \frac{(r - 3 M) (\OrthFrameT_{(A}{}^{(A'}\OrthFrameT_{B)}{}^{B')} -  \OrthFrameZ_{(A}{}^{(A'}\OrthFrameZ_{B)}{}^{B')}) q'(r)}{r^2}\nonumber\\
& +  \frac{(r - 2 M) \OrthFrameZ_{(A}{}^{(A'}\OrthFrameZ_{B)}{}^{B')} q''(r)}{r}.
\end{align}
\end{subequations}
We can now write the divergence of $J_{AA'}$ in terms of the quadratic forms $E_1$ and $\mathbf{W}_{\OrthFrameT\OrthFrameT}$, making use of the fact that since the middle component of $\Theta_{AB}$ is zero,	$\mathbf{W}_{\OrthFrameT\OrthFrameT} = \mathbf{W}_{\OrthFrameZ\OrthFrameZ}$. This gives
\begin{align}
- (\sDiv_{1,1} \MorawetzCurrJ)={}&E_1[q(r)
 + \frac{f(r) (r - M)}{2 r (r - 2 M)}
 -  \tfrac{1}{2} f'(r), - \frac{2 M f(r)}{(r - 2 M) r}
 + f'(r),\frac{f(r) (r - 3 M)}{2 (r - 2 M) r},\beta_{AA'}]\nonumber\\
& + \bigl (- \frac{(r - 2 M) q'(r)}{r^2}
 -  \frac{(r - 2 M) q''(r)}{2 r}\bigr )\mathbf{W}_{\OrthFrameT\OrthFrameT}.
\end{align}
Later we will need the Hardy estimate in the form of Lemma~\ref{lem:Hardy-application} to handle a negative contribution from the $\mathbf{W}_{\OrthFrameT\OrthFrameT}$ term. Therefore, we make use of \eqref{eq:E1beta} and extract a term
$g(r)\left(|\beta_{\OrthFrameT}|^2 + |\beta_{\OrthFrameZ}|^2\right)$
from the $E_1$ term, where $g(r)$ is a radial function to be chosen. This gives
\begin{align}
- (\sDiv_{1,1} \MorawetzCurrJ)={}&E_1[\tfrac{1}{2} g(r)
 + q(r)
 + \frac{f(r) (r - M)}{2 r (r - 2 M)}
-  \tfrac{1}{2} f'(r),-2 g(r)
 - \frac{2 M f(r)}{(r - 2 M) r}
 + f'(r),\nonumber\\
 &\qquad - \tfrac{1}{2} g(r)
 + \frac{f(r) (r - 3 M)}{2 (r - 2 M) r},\beta_{AA'}]
+\bigl(|\beta_{\OrthFrameT}|^2
 + |\beta_{\OrthFrameZ}|^2\bigr) g(r)\nonumber\\
& + \bigl (- \frac{(r - 2 M) q'(r)}{r^2}
 -  \frac{(r - 2 M) q''(r)}{2 r}\bigr )\mathbf{W}_{\OrthFrameT\OrthFrameT}.
 \label{eq:longdivPE1}
\end{align}
By Lemma \ref{lemma:E1pos}, the first terms are non-negative if and only if the inequalities
\begin{subequations}\label{eq:gqineqs}
\begin{align}
&0 \leq g(r)\leq \min
\Bigl(\tfrac{1}{2} f'(r)
 - \frac{M f(r)}{(r - 2 M) r},
  \frac{f(r) (r - 3 M)}{(r - 2 M) r}\Bigr),\\
&\max\Bigl(g(r)
 +  \frac{M f(r)}{(r - 2 M) r}
 -  \tfrac{1}{2} f'(r),
 - \frac{f(r)}{r}
 + \tfrac{1}{2} f'(r)\Bigr) \leq q(r) \leq
 - g(r)
 - \frac{M f(r)}{(r - 2 M) r}
 + \tfrac{1}{2} f'(r) ,
\end{align}
\end{subequations}
hold.
We clearly see that $f(r)$ must satisfy the condition $f(r)(r - 3M)\geq 0$. This means that it must change sign at $r=3M$. It is also important that it is bounded and vanishes at $r=2M$, so we can dominate $J_{AA'}$ with $\EMTensorH_{ABA'B'}\xi^{BB'}$. A natural choice is therefore
\begin{align}\label{eq:f(r)def}
f(r)={}&\frac{(r - 3 M) (r - 2 M)}{2 r^2}.
\end{align}
Now, $g(r)$ must be chosen such that
\begin{align}
0 \leq{}& g(r) \leq \min\Bigl(\frac{(r - 3 M)^2}{2 r^3}, \frac{3 M (r - 2 M)}{4 r^3}\Bigr).
\end{align}
We do not want to saturate the inequalities except at $r=2M$ and $r=3M$, and this can be achieved by setting
\begin{align}\label{eq:g(r)def}
g(r)={}&c_1 \frac{3 M (r - 3 M)^2 (r - 2 M)}{4 r^5},
\end{align}
for some constant $0<c_1<1$. Then, the upper bound for $q(r)$ is
\begin{align}
q(r) \leq \frac{3 M (r - 2 M) \bigl(r^2 - c_1 (r - 3 M)^2\bigr)}{4 r^5}.
\end{align}
We can therefore set
\begin{align}\label{eq:q(r)def}
q(r)={}&\frac{9 M^2 (r - 2 M) (2 r - 3 M)}{4 r^5}\geq 0.
\end{align}
A direct calculation shows that with $f(r)$, $g(r)$, and $q(r)$ given by \eqref{eq:f(r)def}, \eqref{eq:g(r)def}, and \eqref{eq:q(r)def}, respectively, the inequalities \eqref{eq:gqineqs} are satisfied everywhere and saturated only at $r=2M$ and $r=3M$. We now choose $c_1=5/6$. With these definitions we get using \eqref{eq:longdivPE1},
\begin{align}
- (\sDiv_{1,1} \MorawetzCurrJ)={}&
E_1[\frac{(r - 3 M)^2 (14 M^2 - 7 M r + 4 r^2)}{16 r^5},
\frac{M (90 M^3 - 105 M^2 r + 28 M r^2 + r^3)}{4 r^5},\nonumber\\
&\qquad\frac{(r - 3 M)^2 (10 M^2 - 5 M r + 4 r^2)}{16 r^5},\beta_{AA'}]\nonumber\\
& + \frac{5 M \bigl(|\beta_{\OrthFrameT}|^2 + |\beta_{\OrthFrameZ}|^2\bigr) (r - 3 M)^2 (r - 2 M)}{8 r^5}
 - \frac{27 M^2 (r - 5 M) (r - 2 M)^2}{2 r^8}\mathbf{W}_{\OrthFrameT\OrthFrameT}\nonumber\\
={}&
\frac{M |\beta_{\OrthFrameT}|^2 (r - 3 M)^2 (r - 2 M)}{8 r^5}
+ \frac{M |\beta_{\OrthFrameZ}|^2 (r - 2 M) (r^2 + 66 M r - 99 M^2)}{8 r^5}\nonumber\\
&+ \frac{\bigl(|\beta_{\OrthFrameX}|^2 + |\beta_{\OrthFrameY}|^2\bigr) (r - 3 M)^2 (2 r^2 - 3 M r + 6 M^2)}{4 r^5}\nonumber\\
& + \frac{5 M \bigl(|\beta_{\OrthFrameT}|^2 + |\beta_{\OrthFrameZ}|^2\bigr) (r - 3 M)^2 (r - 2 M)}{8 r^5}
 - \frac{27 M^2 (r - 5 M) (r - 2 M)^2}{2 r^8}\mathbf{W}_{\OrthFrameT\OrthFrameT}\nonumber\\
\geq{}&\frac{1}{8}|\beta_{AA'}|^2_{1,\text{deg}}
 + \frac{5 M \bigl(|\beta_{\OrthFrameT}|^2 + |\beta_{\OrthFrameZ}|^2\bigr) (r - 3 M)^2 (r - 2 M)}{8 r^5}\nonumber\\
& - \frac{27 M^2 (r - 5 M) (r - 2 M)^2}{2 r^8}\mathbf{W}_{\OrthFrameT\OrthFrameT}.
\label{eq:bigdivP}
\end{align}
The last term in \eqref{eq:bigdivP}
is not positive everywhere and therefore must be estimated.
Using the Hardy estimate in Lemma \ref{lem:Hardy-application}
we get
\begin{align}
\int_{S_r}-(\sDiv_{1,1} \MorawetzCurrJ)d\mu_{S_r}\geq{}&
\int_{S_r}\frac{1}{8}|\beta_{AA'}|^2_{1,\text{deg}}\nonumber\\
&\qquad+ \frac{M(r - 2 M)(5 r^3 - 84 M r^2 + 423 M^2 r - 540 M^3)}{4r^{8}} \mathbf{W}_{\OrthFrameT\OrthFrameT} d\mu_{S_r}\nonumber\\
\geq{}&\int_{S_r}\frac{1}{8}|\beta_{AA'}|^2_{1,\text{deg}}
+\frac{M (r - 2 M)}{100 r^5}\mathbf{W}_{\OrthFrameT\OrthFrameT} d\mu_{S_r}\nonumber\\
={}&\int_{S_r}\frac{1}{8}|\beta_{AA'}|^2_{1,\text{deg}}
+\frac{M}{100 r^4}|\Theta_{AB}|^2_2 d\mu_{S_r}.
\end{align}
\end{proof}

\section{Proof of the main theorem}\label{sec:mainthmproof}
\begin{proof}[Proof of Theorem~\ref{thm:MainResult}]
First of all Lemma~\ref{lem:divH}, and the divergence theorem gives that $E_{\xi}(\Sigma_2)=E_{\xi}(\Sigma_1)$.
Lemma~\ref{lem:divH}, Theorem~\ref{thm:MorawetzEst} and the divergence theorem yield
\begin{align}
\int_{\Omega} \frac{1}{8}\vert \beta_{a}\vert^2_{1,\text{deg}}
+\frac{M}{100 r^4}\vert Z_{ab}\vert^2_{2} d\mu_{\Omega}\leq {}&\int_{\Omega}-\nabla^a \MorawetzCurrJ_a d\mu_{\Omega}
= E_{\xi +\vecMprimary, \scalMprimary}(\Sigma_1) - E_{\xi +\vecMprimary, \scalMprimary}(\Sigma_2).
\end{align}
Corollary~\ref{cor:uniformequivalence} and the conservation of $E_{\xi}(\Sigma)$ then gives
\begin{align}
\int_{\Omega} \frac{1}{8}\vert \beta_{a}\vert^2_{1,\text{deg}}
+\frac{M}{100 r^4}\vert Z_{ab}\vert^2_{2} d\mu_{\Omega}
\leq {}& \frac{19}{10}E_{\xi}(\Sigma_1) - \frac{1}{10}E_{\xi}(\Sigma_2)
=\frac{9}{5}E_{\xi}(\Sigma_1).
\end{align}
\end{proof}

\section*{Acknowledgements} We are grateful to Steffen Aksteiner for discussions and helpful remarks.  PB and TB were supported by EPSRC grant EP/J011142/1. LA was supported in part by the Knut and Alice Wallenberg Foundation, under a contract to the Royal Institute of Technology, Stockholm, Sweden.

\newcommand{\arxivref}[1]{\href{http://www.arxiv.org/abs/#1}{{arXiv.org:#1}}}
\newcommand{\mnras}{Monthly Notices of the Royal Astronomical Society}
\newcommand{\prd}{Phys. Rev. D}

\end{document}